\theoremstyle{plain}
\newtheorem{theorem}{Theorem}
\newtheorem{lemma}[theorem]{Lemma}
\newtheorem{cor}[theorem]{Corollary}
\newtheorem{prop}[theorem]{Proposition}
\newtheorem{question}[theorem]{Question}
\newtheorem{fact}[theorem]{Fact}
\newtheorem{obs}[theorem]{Observation}
\theoremstyle{definition}
\theoremstyle{definition}
\newcommand{\om}{\omega}
\newcommand{\ep}{\varepsilon}
\title[Forcing with Cantor manifolds]{Effective forcing with Cantor manifolds\footnote{Work in progress.}}
\author{Takayuki Kihara}
\address[Takayuki Kihara]{Department of Mathematics, University of California, Berkeley, United States}
\email{kihara@math.berkeley.edu}
\date{}
\begin{document}
\maketitle

\begin{abstract}
A set $A$ of integers is called total if there is an algorithm which, given an enumeration of $A$, enumerates the complement of $A$, and called cototal if there is an algorithm which, given an enumeration of the complement of $A$, enumerates $A$. 
Many variants of totality and cototality have been studied in computability theory.
In this note, by an effective forcing construction with strongly infinite dimensional Cantor manifolds, which can be viewed as an effectivization of Zapletal's ``half-Cohen'' forcing (i.e., the forcing with Henderson compacta), we construct a set of integers whose enumeration degree is cototal, almost total, but neither cylinder-cototal nor telograph-cototal.
\end{abstract}

\section{Introduction}

A set $A\subseteq\om$ is total if the complement of $A$ is enumeration reducible to $A$, and cototal if  $A$ is enumeration reducible to the complement of $A$.
Recently, it is found that the notion of cototality naturally arises in symbolic dynamics, group theory, graph theory, etc.
For instance, Jeandel \cite{Jean15} showed that the set of words that appear in a minimal subshift is cototal, and Jeandel \cite{Jean15} and Thomas-Williams \cite{TW16} showed that the set of non-identity words in a finitely generated simple group is cototal.
Motivated by these examples, the notion of cototality has received increasing attention in computability theory.
For a thorough treatment of totality and cototality, see Andrews et al. \cite{cototal}.

In this note, to distinguish various notions of totality/cototality, we will utilize the notion of a {\em strongly infinite dimensional Cantor manifold},  which means a strongly infinite dimensional compactum with no weakly infinite dimensional partition.
The notions of a Cantor manifold and strong infinite dimensionality were introduced by Urysohn and Alexandrov, respectively.
These notions have been extensively studied in topological dimension theory \cite{EngBook,vMbook}.

Our construction can be naturally viewed as an effective forcing construction via the forcing $\mathbb{P}_Z$ with strongly infinite dimensional Cantor sub-manifolds of the Hilbert cube.
From this viewpoint, our forcing $\mathbb{P}_Z$ is related to the forcing with Henderson compacta, which is recently introduced by Zapletal \cite{ZapBook,Zap14} (see also \cite{PoZa12,Pol15}).
Here, a compactum is (strongly) Henderson if it is (strongly) infinite dimensional, and all of its subcompacta are either zero-dimensional or (strongly) infinite dimensional.
The notion $\mathbb{P}_Z$ is equivalent to the forcing with strongly Henderson compacta since every strongly infinite dimensional compactum contains a strongly Henderson Cantor manifold \cite{schori,Tumar}.
Zapletal \cite{Zap14} used this forcing to solve Fremlin's half-Cohen problem, asking the existence of a forcing which adds no Cohen real, but whose second iterate adds a Cohen real.
That is to say, the forcing $\mathbb{P}_Z$ is a half-Cohen forcing!

In \cite{Kih} I have proposed a problem on effectivizing Zapletal's forcing.
Thus, our work can be regarded as a path to solving my problem.

However, we shall emphasize a striking difference between Zapletal's work and ours.
The notion of a Cantor manifold does not appear in Zapletal's article \cite{Zap14}.
Indeed, as pointed out by Zapletal himself, the forcing with uncountable dimensional subcompacta of the Hilbert cube is already a half-Cohen forcing.
This means that the notion of a Cantor manifold is not necessary at all if we only aim at obtaining a half-Cohen forcing.
On the contrary, to achieve our purpose, we will make crucial use of the property being a Cantor manifold.


\section{Preliminaries}

In this note, we will use notions from computability theory \cite{OdiBook,OdiBook99} and infinite dimensional topology \cite{EngBook,vMbook}.

\subsection{Computability Theory}

As usual, the terminology ``computably enumerable'' is abbreviated as ``c.e.''
An {\em axiom} is a c.e.\ set of pairs $(n,D)$ of a natural number $n\in\om$ and (a canonical index of) a finite set $D\subseteq\om$.
A set $A\subseteq\om$ is {\em enumeration reducible to} $B\subseteq\om$ (written as $A\leq_eB$) if there is an axiom $\Psi$ such that $n\in A$ if and only if there is $D\subseteq B$ such that $(n,D)\in\Psi$.
In this case, we write $\Psi(B)=A$, that is, $\Psi$ is thought of as a function from $\mathcal{P}(\om)$ to $\mathcal{P}(\om)$, and thus, $\Psi$ is often called an {\em enumeration operator}.
The notion of enumeration reducibility induces an equivalence relation $\equiv_e$, and each $\equiv_e$-equivalence class is called an {\em enumeration degree} (or simply an {\em $e$-degree}).
The $e$-degrees form an upper semilattice $(\mathcal{D}_e,\leq,\vee)$.

\subsubsection{Cototality}

A set $A\subseteq\om$ is {\em total} if $A^\complement\leq_eA$, and {\em cototal} if $A\leq_eA^\complement$.
An $e$-degree is {\em total} ({\em cototal}) if it contains a total (cototal) set.
Note that the notion of totality can be defined via a total function on $\om$.
We consider the following notions for a function $g:\om\to\om$ and $b\in\om$:
\begin{align*}
{\rm Graph}(g)&=\{\langle n,m\rangle:g(n)=m\},\\
{\rm Cylinder}(g)&=\{\sigma\in\om^{<\om}:\sigma\prec g\},\\
{\rm Graph}_b(g)&=\{2\langle n,m\rangle:g(n)\not=m\}\cup\{2\langle n,m\rangle+1:m\geq b\mbox{ and }g(n)=m\}.
\end{align*}
Here, $\langle\cdot,\cdot\rangle$ is the standard pairing function, that is, an effective bijection between $\om^2$ and $\om$, and we write $\sigma\prec g$ if $\sigma$ is an initial segment of $g$, that is, $\sigma(n)=g(n)$ for $n<|\sigma|$.
As usual, every finite string on $\om$ is identified with a natural number via an effective bijection between $\om^{<\om}$ and $\om$.

Let $G$ be either ${\rm Graph}$, ${\rm Cylinder}$, or ${\rm Graph}_0$.
Then, for an $e$-degree is total if and only if it contains $G(f)$ for a function $f:\om\to\om$.
We now consider the following variants of cototality:
\begin{enumerate}
\item An $e$-degree is {\em graph-cototal} if it contains ${\rm Graph}(f)^\complement$ for some function $f:\om\to\om$.
\item An $e$-degree is {\em cylinder-cototal} if it contains ${\rm Cylinder}(f)^\complement$ for some function $f:\om\to\om$.
\item An $e$-degree is {\em telograph-cototal} if it contains ${\rm Graph}_b(f)$ for some function $f:\om\to\om$ and $b\in\om$.
\end{enumerate}

We have the following implications:

\[
\xymatrix{
& \mbox{total} \ar @{=>} [dl]  \ar @{=>} [dr]  & \\
\mbox{cylinder-cototal} \ar @{=>} [dr] & & \mbox{telograph-cototal} \ar @{=>} [dl] \\
& \mbox{graph-cototal} &
}
\]

All of the above implications are strict \cite{NTU}, and all of the above properties imply cototality \cite{cototal}.
An $e$-degree $\mathbf{a}$ is {\em almost total} if for any $e$-degree $\mathbf{b}$, either $\mathbf{b}\leq\mathbf{a}$ holds or $\mathbf{a}\vee\mathbf{b}$ is total.
Clearly, totality implies almost totality.
Although the notion of almost totality is found to be quite useful in various contexts (see \cite{Mil04,GKN,KP}), the relationship with cototality has not been studied yet. 

\subsubsection{Continuous degrees}

Fix a standard open basis $(B_e)_{e\in\om}$ of the Hilbert cube $[0,1]^\om$.
For an oracle $z\subseteq\om$, we say that a set $P\subseteq[0,1]^\om$ is $\Pi^0_1(z)$ if there is a $z$-c.e.\ set $W\subseteq\om$ such that $P=[0,1]^\om\setminus\bigcup_{e\in W}B_e$.
We identify each point $x\in[0,1]^\om$ with its coded neighborhood basis:
\[{\rm Nbase}(x)=\{e\in\om:x\in B_e\}.\]
We write $B_D=\bigcap_{d\in D}B_d$.
Note that $D\subseteq{\rm Nbase}(x)$ if and only if $x\in B_D$.
An enumeration degree $\mathbf{a}$ is called a {\em continuous degree} if $\mathbf{a}$ contains ${\rm Nbase}(x)$ for some $x\in[0,1]^\om$.
The notion of a continuous degree is introduced by Miller \cite{Mil04}, and it has turned out to be a very useful notion (see \cite{daymiller,GKN,KP}).

\begin{fact}[\cite{Mil04,cototal}]
A continuous degree is almost total, and cototal.
\end{fact}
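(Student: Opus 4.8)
The plan is to prove the two halves—almost totality and cototality—separately, each by exhibiting the appropriate enumeration operator. Fix $x \in [0,1]^\om$ and write $N = \nbase{x}$; we must produce operators witnessing the required reductions uniformly.

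First I would treat cototality, i.e. $N \leq_e N^\complement$. The idea is that $e \in \nbase{x}$, meaning $x \in B_e$, can be certified by a \emph{finite} amount of negative information about $x$. Concretely, since $[0,1]^\om$ is compact and $(B_e)_{e\in\om}$ is a basis, for each $e$ with $x \in B_e$ there is a finite set $D \subseteq N^\complement$ such that $B_e \supseteq [0,1]^\om \setminus \bigcup_{d \in D} B_d$, equivalently $\overline{B_e}^{\,\complement}$ — or rather the closed set $[0,1]^\om \setminus B_e$ — is covered by basic open sets each of which misses $x$. More carefully: if $x \in B_e$, then $x$ lies in the open set $B_e$, so $x \notin [0,1]^\om \setminus B_e$, which is a closed (hence compact) set; cover it by finitely many basic open sets $B_{d_1},\dots,B_{d_k}$ none of which contains $x$ (possible since each point $y \ne x$ of that closed set lies in some basic neighborhood avoiding $x$, using that the Hilbert cube is Hausdorff with a basis, and then extract a finite subcover). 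Then $\{d_1,\dots,d_k\} \subseteq N^\complement$ and the inclusion $[0,1]^\om \setminus \bigcup_i B_{d_i} \subseteq B_e$ is a $\Sigma^0_1$ (c.e.) fact about indices, so we may put the axiom $(e, \{d_1,\dots,d_k\})$ into our operator $\Psi$ whenever we enumerate (by searching) such a finite cover witnessing that inclusion. Conversely, if $(e,D) \in \Psi$ with $D \subseteq N^\complement$, then $x \in [0,1]^\om \setminus \bigcup_{d\in D}B_d \subseteq B_e$, so $e \in N$. Hence $\Psi(N^\complement) = N$, giving $N \leq_e N^\complement$.

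Next, almost totality: given an arbitrary $e$-degree $\mathbf b$, say containing a set $B$, I must show $B \leq_e N$ or $N \oplus B$ is total. The strategy follows Miller's original argument. Using the oracle $N \oplus B$ one can, relative to $B$, compute successively finer rational rectangles $R_0 \supseteq R_1 \supseteq \cdots$ shrinking to $x$: at stage $s$ we know finitely much of $N = \nbase{x}$, which pins $x$ to a small basic box, and we use $B$ only to decide, for coordinates where the ``obvious'' dyadic subdivision leaves an ambiguity (i.e. $x$ might lie on a dividing hyperplane), which side to go. If along the way $x$ never lands on such a boundary, then $N \oplus B$ computes a fast-converging sequence of rationals naming $x$, hence computes $N$ and in fact $N \oplus B \equiv_e$ (the join with) a total degree, so $N \oplus B$ is total (being above a real and continuous degrees join to give, in the relevant boundary-free case, a $\Delta^0_2(B)$-type object—here one invokes that a point of $[0,1]^\om$ all of whose coordinates are ``generically placed'' has $\nbase{x} \leq_e$ a total set relative to $B$). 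Otherwise, $x$ lies on some dividing hyperplane, i.e.\ some coordinate $x_i$ is a dyadic rational $q$; but the location of that event is detected from $N$ alone, and the single bit ``which side $B$ chose'' that we needed can be recovered \emph{from $N$} once we know $x_i = q$ exactly—more to the point, in that degenerate case the extra information $B$ supplied is forced, so the construction shows $B \leq_e N$. Packaging this dichotomy into two enumeration operators and one case split yields almost totality.

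The main obstacle is the almost-totality half: making the informal ``boundary vs.\ no boundary'' dichotomy into a genuine uniform pair of enumeration operators requires care, because $e$-reducibility has no recursion-theoretic complementation, so one cannot simply ask ``does $x$ ever hit a hyperplane?'' as a single question. The correct move—exactly as in Miller \cite{Mil04}—is to run \emph{both} reductions simultaneously, letting the operator computing $B$ from $N$ succeed precisely on the coordinates where a boundary is eventually certified from $N^\complement$-information, while the operator computing a total set from $N \oplus B$ carries the burden on the rest; one then argues that for \emph{every} $x$ at least one of the two succeeds globally. Verifying that these dovetailed operators are genuinely enumeration operators (c.e.\ axiom sets) and that the dichotomy is exhaustive is the technical heart, but it is entirely parallel to the known proof that continuous degrees are almost total, so I would cite \cite{Mil04} for the details and only indicate the adaptation here. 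The cototality half, by contrast, is the short compactness argument sketched above and needs no new ideas.
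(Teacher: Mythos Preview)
The paper does not prove this statement; it is recorded as a Fact with references to \cite{Mil04} and \cite{cototal}, so there is no in-paper argument to compare against.

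Your cototality argument is correct and is essentially the standard compactness proof.

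Your almost-totality sketch, however, is not a coherent argument. The central move---``use $B$ to decide, for coordinates where the dyadic subdivision leaves an ambiguity, which side to go''---does not make sense: $B$ is an arbitrary subset of $\om$ bearing no a priori relation to the coordinates of $x$, so it cannot ``decide'' which half-box $x$ occupies. The dichotomy then collapses. If $x$ never lands on a dyadic hyperplane, your subdivision procedure uses only $N$, so $N$ itself is already of total degree and nothing has been shown about $B$. If some $x_i$ is dyadic, that is a fact about $x$ alone, and there is no mechanism in your sketch by which $B\leq_e N$ would follow from it. Indeed the statement you are attempting to prove is false as literally written: with $\mathbf{b}$ ranging over \emph{all} $e$-degrees, taking $\mathbf{a}=\mathbf{0}_e$ (which is a continuous degree) and $\mathbf{b}$ any nonzero non-total degree gives a counterexample. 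The standard definition, and Miller's theorem, restrict $\mathbf{b}$ to \emph{total} degrees; Miller then shows that for a total function $g$, either $x$ admits a $g$-computable Cauchy name (so $\mathbf{a}\leq\mathbf{b}$), or the join $\nbase{x}\oplus{\rm Graph}(g)$ is of total degree, via an analysis of when a Hilbert-cube point possesses a name of least Turing degree. Citing \cite{Mil04} outright is acceptable here, but the heuristic you supply in its place is misleading and should be removed.
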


\subsection{Infinite Dimensional Topology}

In this note, all spaces are assumed to be separable and metrizable.
A {\em compactum} is a compact metric space, and a {\em continuum} is a connected compactum.

\begin{obs}[see Section \ref{sec:appendix}]\label{obs:continuum-open}
Any nonempty open subset of a continuum is not zero-dimensional.
\end{obs}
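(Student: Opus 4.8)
The plan is to derive a contradiction from the existence of a nonempty proper clopen subset of the continuum. Let $X$ denote the continuum and suppose, toward a contradiction, that $U\subseteq X$ is a nonempty open zero-dimensional subspace. Recall that ``zero-dimensional'' means every point of $U$ has a neighborhood basis (in $U$) consisting of sets that are clopen in $U$; the strategy is to extract from this a set that is clopen in all of $X$.

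First dispose of the trivial cases. If $X$ is a single point (degenerate), the statement is vacuous under the usual convention that continua are nondegenerate, so assume $X$ has at least two points. If $U=X$, then $X$ itself is zero-dimensional and connected: picking distinct $x,y\in X$ and a clopen neighborhood $V$ of $x$ with $y\notin V$ yields a nonempty proper clopen subset of $X$, contradicting connectedness. Hence we may assume $U\subsetneq X$.

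Now fix $x\in U$. Since $X$ is metrizable, hence regular, and $X\setminus U$ is closed and does not contain $x$, there is an open $W\subseteq X$ with $x\in W\subseteq\overline W\subseteq U$, the closure taken in $X$. Because $U$ is zero-dimensional, $x$ has a clopen-in-$U$ neighborhood $V$ with $V\subseteq W$. The key point is that $V$ is then clopen in $X$: it is open in $X$ because $U$ is open in $X$, and it is closed in $X$ because $\mathrm{cl}_X(V)\subseteq\mathrm{cl}_X(W)=\overline W\subseteq U$, so $\mathrm{cl}_X(V)=\mathrm{cl}_X(V)\cap U=\mathrm{cl}_U(V)=V$, the last equality since $V$ is closed in $U$. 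Finally $\emptyset\neq V\subseteq\overline W\subseteq U\subsetneq X$, so $V$ is a nonempty proper clopen subset of $X$, contradicting the connectedness of $X$.

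The only delicate step is the passage from ``clopen in $U$'' to ``clopen in $X$'': without first shrinking to a $W$ with $\overline W\subseteq U$ it genuinely fails — e.g.\ for $X$ a figure eight and $U$ the complement of its wedge point, each of the two open arcs is clopen in $U$ but not in $X$. That shrinking uses only regularity of $X$ and is where all the content lies; everything else is bookkeeping, and in the separable metrizable setting one could equivalently phrase the argument via $\operatorname{ind} U=0$.
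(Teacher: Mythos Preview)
Your proof is correct and follows essentially the same route as the paper's: use regularity to shrink to an open set whose closure lies inside $U$, use zero-dimensionality of $U$ to find a relatively clopen set inside it, and observe that this set is then clopen in $X$, contradicting connectedness. The paper phrases the last step via boundaries ($\partial B=\emptyset$) rather than clopenness, and omits your explicit case analysis for degenerate $X$ and $U=X$, but the argument is the same.
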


A space $\mathcal{S}$ is an {\em absolute extensor for $\mathcal{X}$} if for any closed set $P\subseteq\mathcal{X}$, every continuous function $f:P\to\mathcal{S}$ can be extended to a continuous function $g:\mathcal{X}\to\mathcal{S}$.
A space $\mathcal{X}$ is {\em at most $n$-dimensional} if the $n$-sphere $\mathbb{S}^n$ is an absolute extensor for $\mathcal{X}$.
A space $\mathcal{X}$ is {\em countable dimensional} if it is a countable union of finite dimensional subspaces.

To introduce the notion of strong infinite dimensionality, we first recall the Eilenberg-Otto characterization of topological dimension.
Given a disjoint pair $(A,B)$ of closed subsets of $\mathcal{X}$, we say that $L$ is a {\em partition of $\mathcal{X}$ between $A$ and $B$} if there is a disjoint pair $(U,V)$ of open sets in $\mathcal{X}$ such that $A\subseteq U$, $B\subseteq V$, and $L=\mathcal{X}\setminus(U\cup V)$.
An {\em essential} sequence in $\mathcal{X}$ is a sequence of pairs of disjoint closed sets in $\mathcal{X}$ such that for any sequence $(L_i)_{i\leq n}$, if $L_i$ is a partition of $\mathcal{X}$ between $A_i$ and $B_i$, then $\bigcap_iL_i\not=\emptyset$. 
Then, a space $\mathcal{X}$ is at most $n$-dimensional if and only if $\mathcal{X}$ has no essential sequence of length $n+1$.

A space $\mathcal{X}$ is {\em strongly infinite dimensional} (abbreviated as ${\sf SID}$) if $\mathcal{X}$ has an essential sequence of length $\om$.
No ${\sf SID}$ space is countable dimensional.
Note that the Hilbert cube $[0,1]^\om$ is ${\sf SID}$.

A space $\mathcal{X}$ is {\em hereditarily strongly infinite dimensional} (abbreviated as ${\sf HSID}$) if $\mathcal{X}$ is ${\sf SID}$, and any subspace of $\mathcal{X}$ is either zero-dimensional or ${\sf SID}$.
We also say that $\mathcal{X}$ is {\em strongly Henderson} if $\mathcal{X}$ is ${\sf SID}$, and any compact subspace of $\mathcal{X}$ is either zero-dimensional or ${\sf SID}$.
Clearly, every ${\sf HSID}$ space is strongly Henderson.

A space $\mathcal{X}$ is {\em ${\sf SID}$-Cantor manifold} if $\mathcal{X}$ is compact, ${\sf SID}$, and for any disjoint pair of nonempty open sets $U,V\subseteq\mathcal{X}$, $\mathcal{X}\setminus (U\cup V)$ is ${\sf SID}$.
In other words, every partition of $\mathcal{X}$ between nonempty sets is ${\sf SID}$.


\section{Main Theorem}

For an ordinal $\alpha$, let $L_\alpha$ be the $\alpha$-th rank of G\"odel's constructible universe.
By $\om_\alpha^{\rm CK}$, we denote the $\alpha$-th ordinal which is admissible or the limit of admissible ordinals.
In this note, we will show the following:

\begin{theorem}\label{main-theorem}
There is a continuous degree $\mathbf{d}\in L_{\om_{\om}^{\rm CK}+1}\setminus L_{\om_{\om}^{\rm CK}}$ such that the following holds:
\begin{enumerate}
\item $\mathbf{d}$ is not cylinder-cototal.
\item For any $\mathbf{a}\leq\mathbf{d}$, if $\mathbf{a}$ is telograph-cototal, then $\mathbf{a}\in L_{\om_{\om}^{\rm CK}}$.
\end{enumerate}
\end{theorem}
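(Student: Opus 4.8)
\medskip
\noindent\emph{Proof strategy.}
The plan is to realize $\mathbf{d}$ as $\deg_e(\nbase{x})$ for a point $x\in[0,1]^\om$ built by an effective, stage-by-stage forcing construction with $\mathbb{P}_Z$, whose conditions we take to be (indices for) ${\sf SID}$-Cantor submanifolds of the Hilbert cube that are $\Pi^0_1$ relative to an oracle of controlled complexity, ordered by inclusion together with the constraint $\mathrm{diam}\to 0$. At stage $n$ we hold a condition $M_n$ and an oracle $z_n$ (with $M_n$ being $\Pi^0_1(z_n)$ and $z_n$ of complexity around the $n$-th admissible level), and we shrink to $M_{n+1}\subseteq M_n$ deciding the $n$-th requirement, paying one further admissible level for $z_{n+1}$; then $\{x\}=\bigcap_n M_n$ and $\mathbf{d}$ is a continuous degree, hence cototal and almost total (the Fact). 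Since the whole construction is uniformly $\Sigma_1$ over $L_{\om_\om^{\rm CK}}$, it yields a representative of $\mathbf{d}$ in $L_{\om_\om^{\rm CK}+1}$. For $\mathbf{d}\cap L_{\om_\om^{\rm CK}}=\emptyset$ we additionally arrange that some fixed $z_\ast\notin L_{\om_\om^{\rm CK}}$ whose enumeration degree is not telograph-cototal satisfies $z_\ast\le_e\nbase{x}$: this is where having ${\sf SID}$-Cantor manifolds --- rather than merely ${\sf SID}$ compacta --- as conditions is exploited, since one may then refine a condition by passing to one side of a partition, whose closure remains ${\sf SID}$ by the Cantor-manifold hypothesis, and so code a further bit of $z_\ast$. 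Since a single enumeration operator is absolute between a large enough $L_{\om_n^{\rm CK}}$ and $L_{\om_\om^{\rm CK}}$, no member of $\mathbf{d}$ then lies in $L_{\om_\om^{\rm CK}}$.

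\emph{Requirement (1): $\mathbf{d}$ is not cylinder-cototal.}
Fixing enumeration operators $\Psi,\Phi$ and a condition $M$, we look for $M'\subseteq M$ forcing that for no $f:\om\to\om$ do both $\Psi(\nbase{x})={\rm Cylinder}(f)^\complement$ and $\Phi({\rm Cylinder}(f)^\complement)=\nbase{x}$ hold. Were such an $f$ to exist for the generic $x$, then $\mathbf{d}\equiv_e{\rm Cylinder}(f)^\complement$, i.e.\ $\nbase{x}$ would be $e$-equivalent to a set whose complement is a single branch of $\om^{<\om}$ --- ``zero-dimensionally coded'' data that a point generic in a continuum does not possess. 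Working below $M$ one forces either that $\Psi(\nbase{x})$ is not a branch-complement (for instance that it contains every string of some fixed length, whence no valid $f$ exists), which ends the requirement for this $\Psi$; or that it is, in which case an effective Baire-category/genericity argument inside $\mathbb{P}_Z$ pins down enough of $f$ on a subcondition that ${\rm Cylinder}(f)^\complement$, hence $\Phi({\rm Cylinder}(f)^\complement)$, hence $\nbase{x}$, would be decided on a nonempty open piece of $M$, forcing that piece to be a single point and contradicting that every condition is a nondegenerate continuum (Observation \ref{obs:continuum-open}). Each refinement step passes into the boundary of a basic open subset of the current condition, which is ${\sf SID}$ because the condition is an ${\sf SID}$-Cantor manifold, hence contains an ${\sf SID}$-Cantor submanifold, keeping the refinement legal in $\mathbb{P}_Z$.

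\emph{Requirement (2): telograph-cototal $\mathbf{a}\le\mathbf{d}$ have representatives in $L_{\om_\om^{\rm CK}}$.}
Fix an enumeration operator $\Phi$ and a condition $M$, and suppose $A:=\Phi(\nbase{x})$ is forced on $M$ to be telograph-cototal, say $A\equiv_e{\rm Graph}_b(g)$ for a total $g$ and some $b\in\om$. The defining feature of ${\rm Graph}_b(g)$ is that it carries only a $b$-bounded enumeration of $g$: for each $n$ one is eventually told ``$g(n)\neq m$'' for all but at most $b$ values $m$. One then argues, as in (1), that either $A$ can be forced not to be of this shape on a subcondition --- killing this $\Phi$ --- or else, on a subcondition, the finite residual set of candidate values of $g(n)$ is decided for every $n$, so $g$ (hence $A$) is computable from $M_n$ together with finitely much data, placing $\mathbf{a}=\deg_e(A)$ at the same constructible level as $M_n$, which is below $\om_\om^{\rm CK}$. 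It is exactly here that the Cantor-manifold hypothesis is indispensable: deciding the residual sets amounts to ruling out ``thin'' (weakly infinite dimensional) partitions of the condition that would let $\nbase{x}$ compute genuinely complex bounded-enumeration data, and an ${\sf SID}$-Cantor manifold, by definition, has no weakly infinite dimensional partition between nonempty sets.

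\emph{Main obstacle.}
The genuinely hard part is the effective infinite-dimensional topology underpinning $\mathbb{P}_Z$. One must effectivize, uniformly and $\Pi^0_1$-in-an-oracle, the classical facts that every ${\sf SID}$ compactum contains an ${\sf SID}$-Cantor manifold --- indeed a strongly Henderson one, cf.\ \cite{schori,Tumar} --- and that the boundary of a basic open subset of an ${\sf SID}$-Cantor manifold is again ${\sf SID}$, and one must verify that each such step raises the oracle's complexity by exactly one admissible level, so that the $\om$-step iteration lands precisely at $\om_\om^{\rm CK}$ --- neither lower (which would contradict $\mathbf{d}\notin L_{\om_\om^{\rm CK}}$) nor higher. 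Carrying all this out while (i) keeping every condition an ${\sf SID}$-Cantor \emph{manifold}, not merely an ${\sf SID}$ compactum --- the ingredient absent from Zapletal's half-Cohen construction \cite{Zap14} but essential for requirement (2) and for the coding of $z_\ast$ --- and (ii) controlling the constructible complexity of the whole process, is the crux of the argument.
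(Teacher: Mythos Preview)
Your high-level plan---building $x\in[0,1]^\om$ as the intersection of a decreasing sequence of ${\sf SID}$-Cantor manifolds, each $\Pi^0_1$ in a bounded hyperjump---matches the paper, as does the rough shape of the telograph-cototal strategy. But two points are genuine gaps.

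First, your device for forcing $\mathbf{d}\notin L_{\om_\om^{\rm CK}}$ is to code a prechosen $z_\ast\notin L_{\om_\om^{\rm CK}}$ whose $e$-degree is not telograph-cototal below $\nbase{x}$. You never say where $z_\ast$ comes from. Any natural candidate whose bits are available at stage $n$ from $\mathcal{O}^{O(n)}$---e.g.\ $\bigoplus_n\mathcal{O}^n$---has total, hence telograph-cototal, $e$-degree, so is disqualified by your own constraint; producing a non-telograph-cototal $z_\ast$ of the required constructible height is not obviously easier than the theorem itself. The paper sidesteps this entirely by direct diagonalization: requirements $\mathcal{P}_{e,k}:\nbase{z}\neq\Psi_e(\mathcal{O}^k)$, satisfied by excising a small ball from the current condition. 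This uses only strongly Henderson plus Observation~\ref{obs:continuum-open}, \emph{not} the Cantor-manifold property---which brings up the second point.

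You misstate what the Cantor-manifold property gives. It does not say that ``one side of a partition'' has ${\sf SID}$ closure (that follows already from strongly Henderson and Observation~\ref{obs:continuum-open}); it says the \emph{separator} $Q\setminus(U\cup V)$ is ${\sf SID}$ whenever $U,V$ are disjoint nonempty open in $Q$. The paper invokes this precisely when, in the $\mathcal{N}$- and $\mathcal{M}$-strategies, two incompatible pieces of information about the target function are both witnessed on the current condition: passing to the separator kills both witnesses at once while staying ${\sf SID}$. Your sketch of requirement~(1) does not reach this case. The argument ``pin down $f$, then $\nbase{x}=\Phi({\rm Cylinder}(f)^\complement)$ is a single point, contradiction'' handles only the situation where $f$ is already uniquely determined on the condition; it says nothing when different $x$'s in the condition would yield different $f$'s. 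That is exactly where the paper's two-step machinery---first building a finitely branching $\emptyset''$-tree $E^\ast$ of admissible initial segments of $g$ by analyzing $\Psi$, then using $\Phi$ and the Cantor-manifold property to force $\varphi(x)\notin[E^\ast]$ or else read off $g$ computably---does the real work, and your proposal has no analogue of it.
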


\begin{cor}
There is a set of integers whose enumeration degree is cototal, almost total, but neither cylinder-cototal nor telograph-cototal.
\end{cor}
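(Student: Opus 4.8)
The plan is to obtain the corollary as a straightforward translation of Theorem~\ref{main-theorem} into the language of enumeration degrees, using only facts already recorded in the excerpt. Let $\mathbf{d}$ be the continuous degree produced by the theorem, and let $A\subseteq\om$ be any set with $\deg_e(A)=\mathbf{d}$; since $\mathbf{d}$ contains $\mathrm{Nbase}(x)$ for some $x\in[0,1]^\om$, one may simply take $A=\mathrm{Nbase}(x)$. We must verify four properties of $\deg_e(A)=\mathbf{d}$: it is cototal, it is almost total, it is not cylinder-cototal, and it is not telograph-cototal.

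First, cototality and almost totality are immediate: by the cited Fact (\cite{Mil04,cototal}), every continuous degree is both almost total and cototal, so $\mathbf{d}$ inherits both. Second, $\mathbf{d}$ is not cylinder-cototal by clause~(1) of Theorem~\ref{main-theorem} verbatim. Third, for non-telograph-cototality I would argue by contradiction: if $\mathbf{d}$ were telograph-cototal, then taking $\mathbf{a}=\mathbf{d}\leq\mathbf{d}$ in clause~(2) would force $\mathbf{d}\in L_{\om_\om^{\rm CK}}$, contradicting the requirement $\mathbf{d}\in L_{\om_\om^{\rm CK}+1}\setminus L_{\om_\om^{\rm CK}}$ also asserted by the theorem. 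Hence $\mathbf{d}$ is not telograph-cototal. Assembling these four points yields a set of integers (namely $A$) whose $e$-degree is cototal, almost total, but neither cylinder-cototal nor telograph-cototal, which is exactly the corollary.

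There is essentially no obstacle here — the corollary is a bookkeeping consequence once the main theorem is in hand — but the one point deserving a sentence of care is the implicit convention that ``$\mathbf{d}\in L_{\om_\om^{\rm CK}}$'' for an $e$-degree means that $\mathbf{d}$, coded in some fixed reasonable way (e.g.\ as the set of $e$-operators reducing a fixed representative to another, or via a least representative under a canonical well-ordering of $L$), lies in that level; with this reading clause~(2) genuinely excludes $\mathbf{d}$ itself from being telograph-cototal, since $\mathbf{d}$ is explicitly placed outside $L_{\om_\om^{\rm CK}}$. I would state this convention once and then the deduction above goes through line by line.

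Thus the proof of the corollary occupies only a few lines: invoke the Fact for cototality and almost totality, invoke clause~(1) for failure of cylinder-cototality, and apply clause~(2) to $\mathbf{a}=\mathbf{d}$ together with the constructibility-rank requirement to get failure of telograph-cototality.
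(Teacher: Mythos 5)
Your proposal is correct and is precisely the intended derivation: the paper leaves the corollary's proof implicit, but it follows exactly as you say — cototality and almost totality from the Fact about continuous degrees, failure of cylinder-cototality from clause (1), and failure of telograph-cototality by applying clause (2) to $\mathbf{a}=\mathbf{d}$ and using $\mathbf{d}\notin L_{\om_\om^{\rm CK}}$. Your remark about fixing a convention for what it means for a degree to lie in a level of $L$ is a reasonable point of care but does not affect the argument.
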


\subsection{Technical Tools}

To prove Theorem \ref{main-theorem}, we need to effectivize a few facts in infinite dimensional topology.
It is known that every ${\sf SID}$ compact metrizable space has an ${\sf HSID}$ compact subspace.
The construction in \cite{schori} (see also van Mill \cite[Theorem 3.3.10]{vMbook}) shows the existence of an ${\sf HSID}$ $\Pi^0_1(\mathcal{O})$ subset of $[0,1]^\om$, where $\mathcal{O}$ is a $\Pi^1_1$-complete subset of $\om$, and a modification of the construction gives a strongly Henderson $\Pi^0_1$ compactum.

\begin{prop}[see Section \ref{sec:appendix}]\label{lem:eff-HSID}
There is a strongly Henderson $\Pi^0_1$ subset of $[0,1]^\om$.
\end{prop}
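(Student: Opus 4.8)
The plan is to effectivize the classical construction of a strongly Henderson subcompactum of the Hilbert cube (the construction behind \cite{schori}, cf.\ van Mill \cite[Theorem 3.3.10]{vMbook}), organizing it as a Cantor scheme over finite polyhedral ``blocks'' so that the resulting intersection is lightface $\Pi^0_1$. The key observation guiding the modification is that, whereas the ${\sf HSID}$ version must control arbitrary ---in particular non-compact, countable-dimensional--- subspaces and hence must run a recursion of unbounded countable length (which is exactly what forces the $\Pi^1_1$-complete oracle $\mathcal{O}$ in the ${\sf HSID}$ $\Pi^0_1(\mathcal{O})$ set), the hereditary demand restricted to \emph{compact} subspaces is already forced by the local, finitary structure of the scheme, so no oracle is needed.

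First I would set up the scheme. Starting from $X_\emptyset=[0,1]^\om$, build a finitely branching computable tree $T\subseteq\om^{<\om}$ and, for $\sigma\in T$, a block $X_\sigma\subseteq[0,1]^\om$ which is a finite union of basic closed cubes, with $X_{\sigma\fr i}\subseteq X_\sigma$ and $\mathrm{diam}(X_\sigma)\to 0$ along every branch, arranged so that $X:=\bigcap_k\bigcup_{|\sigma|=k}X_\sigma$ is $\Pi^0_1$ uniformly in the code of the scheme. To each $\sigma$ attach a disjoint pair $(A_\sigma,B_\sigma)$ of closed ``end faces'' of $X_\sigma$. Using the finite combinatorial lemmas of dimension theory (refinement of essential families, extension of partitions) ---all effective on finite polyhedra--- choose the blocks and the pairs so that: (i) along each branch the pairs $(A_\sigma,B_\sigma)$ restrict, in the limit, to an essential sequence of length $\om$ in the corresponding piece of $X$; and (ii) each block $X_\sigma$ is, relative to its distinguished ends, a ${\sf SID}$-Cantor manifold for which every subcontinuum meeting both $A_\sigma$ and $B_\sigma$ already spans $(A_\sigma,B_\sigma)$ essentially. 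By a standard inverse-limit argument (cf.\ \cite{vMbook}), (i) yields that $X$ is ${\sf SID}$.

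For the hereditary step, let $K\subseteq X$ be compact with $\dim K\geq 1$. A compactum of positive dimension is not totally disconnected, so $K$ contains a nondegenerate subcontinuum $C$, and by Observation \ref{obs:continuum-open} every nonempty relatively open subset of $C$ is positive-dimensional. Hence $C$ cannot be confined to the portions of the scheme where dimension has been forced to drop near the end faces: at some level, in some block $X_{\sigma_0}$, the continuum $C$ meets both $A_{\sigma_0}$ and $B_{\sigma_0}$, so by (ii) it spans that pair essentially. Passing to the blocks one level down that refine a partition of $X_{\sigma_0}$ between $A_{\sigma_0}$ and $B_{\sigma_0}$ and repeating, one extracts from $C$ an essential sequence of length $\om$; hence $K\supseteq C$ is ${\sf SID}$. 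Since this argument used only compactness and positive-dimensionality of $K$, no transfinite bookkeeping enters, and the entire scheme ---tree, blocks, pairs--- can be specified by a recursive code; therefore $X$ is lightface $\Pi^0_1$ and strongly Henderson.

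The step I expect to be the main obstacle is clause (ii) together with the effectiveness of the refinement: for each finite polyhedral block with prescribed ends one must produce a strictly finer block (a smaller finite union of basic cubes) that still carries a longer essential sequence and whose subcontinua joining the two ends are forced to cross essentially. This is the heart of the classical Henderson/Schori lemma, and here it must be carried out by a genuinely computable procedure, so that the code of the whole scheme is recursive. A secondary technical point is the inverse-limit verification that the restricted essential sequences survive, both for $X$ itself and inside each positive-dimensional subcompactum; this is routine dimension theory but must be organized carefully enough to support the recursion used in the hereditary step.
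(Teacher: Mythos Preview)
Your outline diverges from the paper's construction, and the hereditary step has a real gap.

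The paper does not build a Cantor scheme of polyhedral blocks with end faces. It effectivizes the partition-encoding construction behind van Mill's theorem: for a computable Cantor set $C\subseteq[0,1]$, a computable infinite $\Lambda\subseteq\omega$, and a coordinate $j\notin\Lambda$, one produces a uniform sequence $(S_i)_{i\in\Lambda}$ of $\Pi^0_1$ partitions of $[0,1]^\omega$ between the opposite faces $\pi_i^{-1}\{0\}$ and $\pi_i^{-1}\{1\}$ with the property that every compact $M\subseteq\bigcap_{i\in\Lambda}S_i$ satisfying $C\subseteq\pi_j[M]$ is ${\sf SID}$. The point is that each $x\in C$ \emph{encodes} an entire sequence of partition choices, so if $\pi_j[M]\supseteq C$ then $M$ already meets every such sequence of partitions. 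One then fixes a computable family $(C_i)$ of pairwise disjoint Cantor sets in $[0,1]$ so dense that every nondegenerate subinterval contains some $C_i$, splits the coordinates into infinite blocks, and intersects all the resulting partitions to obtain $H$. Any positive-dimensional compact $K\subseteq H$ contains a nondegenerate continuum, whose projection to some coordinate is a nondegenerate interval and hence swallows some $C_i$; the lemma then yields ${\sf SID}$ immediately. The effective content is a preliminary lemma converting $d$-computable partitions into $\Pi^0_1$ ones.

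In your scheme the hereditary argument does not go through. From $\mathrm{diam}(X_\sigma)\to 0$ together with $C$ nondegenerate you get only that $C$ eventually meets two sibling blocks; you do \emph{not} get that $C$ touches the two designated faces $A_{\sigma_0},B_{\sigma_0}$ of any single block---it may pass between neighboring blocks through portions of their common boundary disjoint from those faces. Even granting that first step, the recursion is unsupported: nothing in (i)--(ii) forces a subcontinuum that spans the level-$k$ faces to also span the level-$(k{+}1)$ faces of some child, which is exactly what you would need to accumulate an essential sequence \emph{inside} $C$. Condition (ii), read literally for a single pair and a connected set, is nearly automatic and does not supply that inductive hook. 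What is missing is precisely the encoding idea above, which turns ``positive-dimensional compact'' into ``${\sf SID}$'' in one stroke rather than by an unbounded recursion along $C$.
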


We shall also extract an effective content from the {\sf SID}-version of the Cantor Manifold Theorem \cite{Tumar}, which asserts that every ${\sf SID}$ compactum contains a ${\sf SID}$-Cantor manifold.
Let $\mathcal{O}^z$ be a $\Pi^1_1$-complete subset of $\om$ relative to an oracle $z$, that is, the hyperjump of $z$.

\begin{prop}[see Section \ref{sec:appendix}]\label{cor:eff-Cantor-manifold}
Every strongly Henderson $\Pi^0_1(z)$ subset of $[0,1]^\om$ contains a $\Pi^0_1(\mathcal{O}^z)$ ${\sf SID}$-Cantor manifold.
\end{prop}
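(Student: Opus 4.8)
The plan is to effectivize the classical proof of the {\sf SID}-Cantor Manifold Theorem (Tumarkin's argument, see \cite{Tumar,EngBook}), keeping careful track of where jumps are introduced. Recall that the classical proof proceeds by a well-founded recursion: starting from a strongly Henderson compactum $\mathcal{X}$, one repeatedly passes to a partition between two nonempty open sets whenever the current compactum fails to be a Cantor manifold, i.e., whenever some such partition is weakly infinite dimensional (equivalently, since $\mathcal{X}$ is strongly Henderson and every nonempty partition is either zero-dimensional or {\sf SID}, whenever some such partition is zero-dimensional). By \cite{Observation}\ref{obs:continuum-open} (or rather the hereditary version of the strong Henderson property), each partition we take is itself a strongly Henderson compactum, and one shows that the process terminates at some countable ordinal stage, yielding a compactum all of whose open-set partitions are {\sf SID}, that is, a {\sf SID}-Cantor manifold; nonemptiness is preserved along the way because we only ever shrink to a {\sf SID} (hence nonempty, indeed non-zero-dimensional) subcompactum.

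First I would set up the tree of finite sequences of "partition moves": a node specifies a finite sequence of (codes for) disjoint open pairs $(U_0,V_0),\dots,(U_{k-1},V_{k-1})$ together with the nested partitions $\mathcal{X}=\mathcal{X}_0\supseteq\mathcal{X}_1\supseteq\cdots\supseteq\mathcal{X}_k$, where $\mathcal{X}_{i+1}$ is a chosen partition of $\mathcal{X}_i$ between $\overline{U_i}\cap\mathcal{X}_i$ and $\overline{V_i}\cap\mathcal{X}_i$; we keep a node only as long as each $\mathcal{X}_{i+1}$ is non-zero-dimensional. The key observation is that this tree is well-founded: an infinite branch would, by compactness and the classical dimension-theoretic lemma underlying Tumarkin's theorem, produce a weakly infinite dimensional compactum inside $\mathcal{X}$ that is not zero-dimensional, contradicting the strongly Henderson hypothesis. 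Next I would verify that membership in this tree, and the relation "$\mathcal{X}_{i+1}$ is zero-dimensional", are arithmetic in $z$ uniformly — using the standard fact that a $\Pi^0_1(z)$ compactum is zero-dimensional iff it satisfies a $\Pi^0_2(z)$ (in fact the relevant complexity is bounded) condition about refining covers by clopen pieces, and that "being {\sf SID}" / "being a nonempty partition between given open sets" can be witnessed by $\Pi^0_1(z)$ data. Since the tree is well-founded and its paths are recursive in $z$, its rank is below $\om_1^{\rm CK,z}$, and a $\Pi^1_1(z)$-complete oracle $\mathcal{O}^z$ can compute (an index for) a leaf of maximal rank, hence a terminal $\mathcal{X}_k$; this $\mathcal{X}_k$ is $\Pi^0_1(\mathcal{O}^z)$ because it is cut out by finitely many $\mathcal{O}^z$-computable open sets from the $\Pi^0_1(z)$ set $\mathcal{X}$, and being terminal means every partition of it between nonempty open sets is non-zero-dimensional, i.e., {\sf SID} by the strongly Henderson property inherited from $\mathcal{X}$. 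Thus $\mathcal{X}_k$ is a $\Pi^0_1(\mathcal{O}^z)$ {\sf SID}-Cantor manifold.

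The main obstacle I expect is twofold. First, one must be careful that the classical argument really does terminate and really does need only a $\Pi^1_1(z)$ oracle: the subtle point is the passage from "the tree of partition-moves is well-founded" to "its rank is recursively bounded", which requires that the branching and the stopping condition be genuinely $z$-arithmetic, not merely $\Pi^1_1(z)$ — in particular that we can enumerate candidate partitions $(U,V)$ and decide zero-dimensionality of the resulting $\Pi^0_1(z)$ partition within arithmetic complexity. This should follow from the effective theory of covering dimension for $\Pi^0_1$ classes, but it needs to be stated precisely. Second, at each step we must make a \emph{uniform} choice of partition, and the chosen partition must again be presentable as a $\Pi^0_1$ class over the appropriate oracle with an index computable from the node; here one uses the effective version of the partition extension lemma (every disjoint closed pair in a compact metric space has a partition with a prescribed simple description), so that the compositional bookkeeping along a branch of length $k$ only accumulates finitely much oracle information and stays $\Pi^0_1(\mathcal{O}^z)$ at the leaf. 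Once these two effectivity points are nailed down, the rest is routine transcription of Tumarkin's proof, and I would relegate the detailed verification to Section \ref{sec:appendix} as the statement already promises.
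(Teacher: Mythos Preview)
Your proposal has a genuine gap in the well-foundedness step, and the overall strategy diverges from what the paper actually does.

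Under your literal description --- nodes record moves $(U_i,V_i)$, $\mathcal{X}_{i+1}$ is the partition $\mathcal{X}_i\setminus(U_i\cup V_i)$, and a node survives as long as each $\mathcal{X}_{i+1}$ is non-zero-dimensional --- the tree is \emph{never} well-founded: any {\sf SID}-Cantor sub-manifold of $\mathcal{X}$ (and the classical theorem guarantees one) generates an infinite branch, since every one of its partitions is {\sf SID}. Leaves of that tree are compacta all of whose partitions are zero-dimensional, the opposite of what you want. If instead you meant to pass to an {\sf SID} \emph{half} $\mathcal{X}_i\setminus U_i$ whenever a zero-dimensional partition is found (so that leaves would be Cantor manifolds), well-foundedness is still unjustified: along an infinite branch the intersection $\bigcap_i\mathcal{X}_i$ is a subcompactum of the strongly Henderson $\mathcal{X}$, hence zero-dimensional or {\sf SID}, and neither case yields the ``weakly infinite dimensional but not zero-dimensional'' contradiction you assert. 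The obstacle is that ``{\sf SID}'' is not closed under nested intersections, so Brouwer reduction cannot be run on that family directly.

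The paper avoids this by reducing to a \emph{finite-dimensional} effective Cantor manifold lemma (Lemma~\ref{lem:eff-Cantor-manifold}): from a $\Pi^0_1(z)$ set of dimension $\geq n$ one extracts a $\Pi^0_1(\mathcal{O}^z)$ subset with no partition of dimension $\leq n-2$. The family used there --- closed $Q$ over which a fixed $f:A\to\mathbb{S}^{n-1}$ does \emph{not} extend --- \emph{is} closed under decreasing intersections, so Brouwer reduction runs in $\omega$ steps; $\mathcal{O}^z$ enters because ``$f$ extends over $A\cup Q$'' is $\Sigma^1_1$, not for any rank bound. Taking $n=2$ and invoking the strongly Henderson hypothesis then promotes ``no zero-dimensional partition'' to ``every partition is {\sf SID}''. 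The paper remarks explicitly, just after the statement of Proposition~\ref{cor:eff-Cantor-manifold}, that every known proof passes through the Borsuk Homotopy Extension Theorem, and that a proof using only covering or Eilenberg--Otto data --- essentially your arithmetic zero-dimensionality test --- would give a sharper bound and is left open.
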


As a corollary, if a strongly Henderson compactum $H\subseteq [0,1]^\om$ has a $\Pi^0_1$-code in $L_{\om_\alpha^{\rm CK}}$, then there is an ${\sf SID}$-Cantor manifold $C\subseteq H$ which has a $\Pi^0_1$-code in $L_{\om_{\alpha}^{\rm CK}+1}$.

The reason for the need of the hyperjump in Proposition \ref{cor:eff-Cantor-manifold} is due to non-effectivity of the notion of an absolute extensor\footnote{Given a closed subset $P$ of a compactum $X$, is the set of all $f:P\to\mathbb{S}^n$ extendible over $X$ Borel in $C(P,\mathbb{S}^n)$ endowed with the compact-open topology?}.
Because the Lebesgue covering dimension and the Eilenberg-Otto characterization admit a quite effective treatment, if we could prove the Cantor Manifold Theorem only using the notions of a covering or an Eilenberg-Otto separation\footnote{Of course, the separation dimension and the extension dimension coincide, but we need an instance-wise correspondence:
Given a function $f:A\to\mathbb{S}^n$, how can we obtain a sequence of pairs of disjoint closed sets in $X$ whose inessentiality in $Y$ is equivalent to extendability of $f$ over $Y$ for any compact space $Y$ such that $A\subseteq Y\subseteq X$?},
we would obtain a far more effective version of Proposition \ref{cor:eff-Cantor-manifold}.
However, to the best of our knowledge, any proof employs the Borsuk Homotopy Extension Theorem (or its variant), which requires us to deal with extendability of functions.

\subsection{Proof of Main Theorem}

We now prove Theorem \ref{main-theorem} by applying Proposition \ref{cor:eff-Cantor-manifold}.
Hereafter, we write $\mathcal{O}^n$ for the $n$-th hyperjump.

\begin{proof}[Proof of Theorem \ref{main-theorem}]
We construct a decreasing sequence $(P_s)_{s\in\om}$ of ${\sf SID}$ $\Pi^0_1(\mathcal{O}^{\alpha(s)})$ subsets of $[0,1]^\om$ where $\alpha(s)\in\om$, and $z\in\bigcap_sP_s$ satisfies the following requirements:
\begin{align*}
\mathcal{P}_{e,k}:\;&z\not=\Psi_e(\mathcal{O}^k),\\
\mathcal{N}_{e,b}:\;&(\forall f)\;[\Psi_e(z)={\rm Graph}_b(f)\; \Longrightarrow\;(\exists n)\;f\leq_T\mathcal{O}^{n}],\\
\mathcal{M}_{d,e}:\;&(\forall g)\;[\Psi_d(z)={\rm Cylinder}(g)^\complement\mbox{ and }\Psi_e\circ\Psi_d(z)=z\;\Longrightarrow\;(\exists n)\;g\leq_T\mathcal{O}^{n}].
\end{align*}
Here, $(\Psi_e)_{e\in\om}$ is a list of enumeration operators, and $\leq$ is the Turing reducibility.
We first check that these requirements ensure the desired property.
It is clear that $\mathcal{P}$-requirements ensure that $z$ is not arithmetical.
Assume that ${\rm Graph}_b(f)\leq_ez$ for some $f$ and $b$.
By the $\mathcal{N}$-requirements, $f$ is arithmetical, and so is ${\rm Graph}_b(f)$, since ${\rm Graph}_b(f)\leq_e{\rm Graph}(f)$.
Similarly, if ${\rm Cylinder}(g)^\complement\equiv_ez$ for some $g$, then by the $\mathcal{M}$-requirements, $g$ is arithmetical, and so is ${\rm Cylinder}(g)^\complement$.
However, it is impossible since the $\mathcal{P}$-requirements ensure that $z$ is not arithmetical as mentioned above.
Thus, we have ${\rm Cylinder}(g)^\complement\not\equiv_ez$ for any $g$.

We now start the construction.
We begin with a strongly Henderson $\Pi^0_1$ set $P_0\subseteq [0,1]^\om$ in Proposition \ref{lem:eff-HSID}.
Inductively assume that we have constructed a strongly Henderson $\Pi^0_1(\mathcal{O}^{\alpha(s)})$ set $P_s\subseteq P_0$.
At the beginning of stage $s$, choose a $\Pi^0_1(\mathcal{O}^{\alpha(s)+1})$ ${\sf SID}$-Cantor manifold $Q_s\subseteq P_s$ by using Proposition \ref{cor:eff-Cantor-manifold}.
Note that $Q_s$ is also strongly Henderson since $Q_s$ is a closed subspace of $P_s$.

\subsubsection{$\mathcal{P}$-strategy.}
If $s=3\langle e,k\rangle$, we proceed the following simple diagonalization strategy.
Assume that $\Psi_e(\mathcal{O}^{k})$ determines a point $y\in Q_s$.
Since $Q_s$ has at least two points, for a sufficiently small open neighborhood $U$ of $y$, $Q_s\setminus\overline{U}$ is nonempty.
Thus, by Observation \ref{obs:continuum-open}, $Q_s\setminus U\supseteq Q_s\setminus \overline{U}$ is not zero-dimensional.
Now, $Q_s$ is strongly Henderson, and so is $P_{s+1}:=Q_s\setminus U$.
Then, the requirement $\mathcal{P}_{e,k}$ is clearly fulfilled.

\subsubsection{$\mathcal{N}$-strategy.}
If $s=3\langle e,b\rangle+1$, then for any $n\in\om$ and $c>b$, consider the following:
\begin{align*}
U_c^n&=\bigcup\{Q_s\cap B_D:(\exists m\in [b,c))\;\langle 2\langle n,m\rangle+1,D\rangle\in\Psi_e\},\\
V_c^n&=\bigcup\{Q_s\cap B_D:(\forall m\in [b,c))\;\langle 2\langle n,m\rangle,D\rangle\in\Psi_e\}.
\end{align*}

Clearly, $U_c^n$ and $V_c^n$ are c.e.~open in $Q_s$ for any $n$ and $c$.
Hereafter by $\psi^z$ we denote a function such that $\Psi_e^z={\rm Graph}_b(\psi^z)$ if it exists.
For instance, $z\in U_c^n$ ensures that $\psi^z(n)\in[b,c)$, and $z\in V_c^n$ ensures that $\psi^z(n)\not\in[b,c)$.
Without loss of generality, we can always assume that $\langle 2\langle n,m\rangle +1,D\rangle\in\Psi_e$ implies $\langle 2\langle n,k\rangle,D\rangle\in\Psi_e$ for all $k\not=m$.

\medskip

\noindent
{\bf Case A1.}
There are $c,n$ such that $U^n_c\cap V^n_c$ is nonempty.
By regularity, there is a nonempty open ball $B$ whose closure is included in $U^n_c\cap V^n_c$.
By Observation \ref{obs:continuum-open}, $Q_s\cap B$ (and hence $Q_s\cap\overline{B}$) is not zero-dimensional.
Hence, $Q_s\cap\overline{B}$ is ${\sf SID}$ since $Q_s$ is strongly Henderson.
Then, define $P_{s+1}=Q_s\cap\overline{B}$ and go to stage $s+1$.

If Case A1 is applied, we have $z\in P_{s+1}\subseteq U^n_c\cap V^n_c$.
However, $z\in U^n_c\cap V^n_c$ ensures that both $\psi^z(n)\in[b,c)$ and $\psi^z(n)\not\in[b,c)$ whenever $\psi^z(n)$ is defined.
This means that $\psi^z(n)$ is ill-defined on $z\in U^n_c\cap V^n_c$.
Hence, $\mathcal{N}_{e,b}$ is satisfied.

\medskip

\noindent
{\bf Case A2.} There are $n$ and $c$ such that both $Q_s\cap U_c^n$ and $Q_s\cap V_c^n$ are nonempty.
Note that $Q_s\cap U_c^n$ and $Q_s\cap V_c^n$ are disjoint since Case A1 is not applied.
Since $Q_s$ is a ${\sf SID}$-Cantor manifold, $P_{s+1}:=Q_s\setminus(U_c^n\cup V_c^n)$ is ${\sf SID}$.
Then go to stage $s+1$.

If Case A2 is applied at some substage $n\in\om$, then $P_{s+1}\cap (U^n_c\cup V^n_c)$ is empty, and therefore $z\not\in U^n_c\cup V^n_c$.
That is, $2\langle n,m\rangle+1\not\in\Psi^z_e$ for all $m\in[b,c)$, and $2\langle n,m\rangle\not\in\Psi^z_e$ for some $m\in[b,c)$.
The former means that $\psi^z(n)\not\in[b,c)$, but the latter has to imply that $\psi^z(n)\in[b,c)$ whenever $\psi^z(n)$ is defined.
This is impossible, and therefore $\psi^z(n)$ is undefined.
Hence, $\mathcal{N}_{e,b}$ is satisfied.

\medskip

For each $m\in[b,c)$, consider the following:
\[U_{c,m}^n=\bigcup\{Q_s\cap B_D:\langle 2\langle n,m\rangle+1,D\rangle\in\Psi_e\}.\]
Note that $z\in U_{c,m}^n$ ensures that $\psi^z(n)=m$.
Clearly, $U_c^n=\bigcup_{b\leq m<c}U_{c,m}^n$.
Now consider the following set:
\[J^n_c=\{m\in [b,c):Q_s\cap U_{c,m}^n\not=\emptyset\}\] 
The condition $m\in J^n_c$ indicates that $Q_s$ has an element $z$ such that $\psi^z(n)=m$ whenever $\psi^z$ is defined.

\medskip

\noindent
{\bf Case A3.} There are $n$ and $c$ such that $Q_s\cap V_c^n$ is empty (this indicates that $\psi^z(n)\not\in[b,c)$ is never ensured for any $z\in Q_s$), and $J^n_c$ is not a singleton.
If $J^n_c$ is empty, put $P_{s+1}=Q_s$, and go to stage $s+1$.
If $J^n_c$ has at least two elements, consider
\[F=Q_s\setminus\bigcup_{m\in[b,c)}U^n_{c,m}.\]

We claim that $F$ disconnects $Q_s$.
To see this, note that $(Q_s\cap U_{c,m}^n)_{b\leq m<c}$ is pairwise disjoint.
Otherwise, $Q_s\cap U_{c,m}^n\cap U_{c,k}^n$ is nonempty for some $b\leq m<k<c$.
Note that $U_{c,m}^n\subseteq U_{c,m+1}^n$ and by our assumption on $\Psi_e$, $U_{c,k}^n\subseteq V^n_{c,m+1}$.
This implies that $Q_s\cap U^n_{m+1}\cap V^n_{m+1}$ is nonempty.
Then, however, Case A1 must be applied.
Now, choose $m\in J_n$, and then $Q_s\setminus F$ is written as the union of disjoint open sets $Q_s\cap U^n_{c,m}$ and $Q_s\cap\bigcup_{k\not=m}U^n_{c,k}$, that is, $F$ disconnects $Q_s$.
Then, as in the previous argument, since $Q_s$ is a ${\sf SID}$-Cantor manifold, $P_{s+1}:=Q_s\cap F$ is ${\sf SID}$.
Then go to stage $s+1$.

If Case A3 is applied, since $P_{s+1}\subseteq Q_s$ does not intersect with $V^n_c$, for any $z\in P_{s+1}$, we have $\psi^z(n)\in[b,c)$ whenever $\psi^z$ is defined.
If $J_n$ is empty, then $Q_s\cap U^n_{c,m}$ is empty, in particular, $z\not\in U^n_{c,m}$, for all $m\in[b,c)$.
If $J_n$ has at least two elements, our construction ensures that $P_{s+1}\cap\bigcup_mU^n_{c,m}$ is empty, and in particular, $z\not\in U^n_{c,m}$ for all $m\in[b,c)$.
This clearly implies that $\psi^z(n)\not\in[b,c)$ in both cases.
Thus, $\psi^z$ has to be undefined, and therefore $\mathcal{N}_{e,b}$ is satisfied.

\medskip

Let $I_s$ be the set of all $n\in\om$ such that $Q_s\cap U_c^n$ is empty for any $c$.
Note that for any $n\in I_s$ and $z\in Q_s$, $\psi^z(n)\in[b,\infty)$ is never ensured. 
Then for any $n\in I_s$ and $i<b$, consider the following c.e.~open set in $Q_s$:
\[W^n_i=\bigcup\{Q_s\cap B_D:(\forall j<b)[j\not=i\;\Longrightarrow\;\langle 2\langle n,j\rangle,D\rangle\in\Psi_e\}.\]
Note that $z\in W^n_i$ ensures that $\psi^z(n)\not=j$ for any $j\in b\setminus\{i\}$.

\medskip

\noindent
{\bf Case A4.}
There are $n\in I_s$ and $i,j$ with $i\not=j$ such that $W^n_i\cap W^n_j$ is nonempty.
As in Case A1, there is a nonempty open ball $C$ such that $\overline{C}\subseteq W^n_i\cap W^n_j$.
Then, define $P_{s+1}=Q_s\cap \overline{C}$, which is ${\sf SID}$ by Observation \ref{obs:continuum-open} since $Q_s$ is strongly Henderson, and go to stage $s+1$.

If case A4 is applied, we have $z\in P_{s+1}\subseteq W^n_i\cap W^n_j$.
Note that $\psi^z(n)$ is undefined on $z\in W^n_i\cap W^n_j$, because $\psi^z(n)\not=k$ for any $k<b$, whereas $\psi^z(n)\not\in[b,\infty)$ by $n\in I_s$.
Hence, $\mathcal{N}_{e,b}$ is satisfied.

\medskip

For each $n\in I_s$, consider the following: 
\[K_n=\{i<b:Q_s\cap W^n_i\not=\emptyset\}\]

\noindent
{\bf Case A5.}
There is $n\in I_s$ (i.e., $Q_s\cap U_c^n$ is empty for any $c$) such that $K_n$ in not a singleton.
If $K_n$ is empty, put $P_{s+1}=Q_s$, and go to stage $s+1$.
If $K_n$ has at least two elements, as in Case A3, $H=Q_s\setminus\bigcup_{i<b}W^n_i$ disconnects $Q_s$; otherwise Case A4 is applied.
Since $Q_s$ is a ${\sf SID}$-Cantor manifold, $P_{s+1}:=Q_s\cap H$ is ${\sf SID}$.
Then go to stage $s+1$.

If Case A5 is applied, we have $n\in I_s$, which forces that for any $z\in P_{s+1}$, $\psi^z(n)<b$ whenever $\psi^z$ is defined.
If $K_n$ is empty, then $Q_s\cap W^n_i$ is empty, and in particular, $z\not\in W^n_i$, for all $i<b$.
If $K_n$ has at least two elements, $P_{s+1}\cap\bigcup_{i<b}W^n_i$ is empty, and in particular, $z\not\in W^n_i$ for all $i<b$.
This clearly implies $\psi^z(n)\geq b$ whenever $\psi^z$ is defined in both cases.
Thus, $\psi^z$ has to be undefined, and therefore $\mathcal{N}_{e,b}$ is satisfied.

\medskip

\noindent
{\bf Case A6.}
None of the above cases is applied.
Since Case A2 is not applied, for any $n$ and $c$, either $Q_s\cap U^n_c$ or $Q_s\cap V^n_c$ is empty.
Therefore, if $n\not\in I_s$, then $Q_s\cap V^n_c$ is empty for some $c$.
Since Case A3 is not applied, $J^n_c$ has to be a singleton for such $c$.
If $n\in I_s$, since Case A5 is not applied, $K_n$ has to be a singleton.
In other words, for any $n\in\om$, one of the following holds:
\begin{enumerate}
\item There is $c$ such that $Q_s\cap V^n_c$ is empty, and $J^n_c$ is a singleton.
\item $Q_s\cap U^n_c$ is empty for all $c$, and $K_n$ is a singleton.
\end{enumerate}

Note that given a $\Sigma^0_2(\xi)$ set $S\subseteq[0,1]^\om$, we can decide $S=\emptyset$ or not by using $\xi''$, the double jump of $\xi$.
Thus, given $n$, by using $\mathcal{O}^{\alpha(s)}{}''$, we can decide whether (1) or (2) holds, and define $\Gamma(\mathcal{O}^{\alpha(s)}{}'';n)=p$, where $p$ is a unique element in $J^n_c$ if (1) holds and $c$ is the least witness, or $K_n$ if (2) holds.
Finally, we define $P_{s+1}=Q_s$ and then go to stage $s+1$.

If Case A6 is applied, we have constructed a function $\Gamma(\mathcal{O}^{\alpha(s)}{}'')$.
We claim that for any $z\in P_{s+1}=Q_s$, $\Gamma(\mathcal{O}^{\alpha(s)}{}'')=\psi^z$ whenever $\psi^z$ is defined.
Fix $z\in P_{s+1}$ and $n\in\om$, and assume that $\psi^z(n)$ is defined.
First assume that (1) holds for $n$ and $c$ is the least witness of this fact.
Then $\Gamma(\mathcal{O}^{\alpha(s)}{}'';n)=p$ is the unique element of $J^n_c$.
Since (1) holds, $Q_s\cap V^n_c$ is empty, and as before, this forces $\psi^z(n)\in[b,c)$.
Since $J_n=\{p\}$, for all $m\in[b,c)$ with $m\not=p$, $Q_s\cap U^n_{c,m}$ is empty, that is, $z\not\in U^n_{c,m}$ and therefore $\psi^z(n)\not=m$.
Thus we must have $\psi^z(n)=p$, and therefore, $\Gamma(\mathcal{O}^{\alpha(s)}{}'';n)=p=\psi^z(n)$.
Next, assume that (2) holds.
Then $\Gamma(\mathcal{O}^{\alpha(s)}{}'';n)=p$ is the unique element of $K_n$.
Since (2) holds, $Q_s\cap U^n_c$ is empty for all $c$, which forces $\psi^z(n)<b$.
Since $K_n=\{p\}$, for all $i<b$ with $i\not=q$, $Q_s\cap W^n_i$ is empty, that is, $z\not\in W^n_i$ and therefore $\psi^z(n)\not=i$.
Thus we must have $\psi^z(n)=q$, and therefore, $\Gamma_{e,b}(\mathcal{O}^{\alpha(s)}{}'';n)=p=\psi^z(n)$.
Consequently, the requirement $\mathcal{N}_{e,b}$ is fulfilled.

\subsubsection{$\mathcal{M}$-strategy}

Now, assume that $s=3\langle d,e\rangle+2$.
Hereafter we abbreviate ${\rm Cylinder}(g)^\complement$ as $C_g$, and use $\Psi$ and $\Phi$ instead of $\Psi_d$ and $\Psi_e$, respectively.
We can assume that each basic open ball $B_e$ is of the form $\prod_{i<d(e)}B(q(e);2^{-r(e)})\times\prod_{j\geq d(e)}[0,1]_j$.
Then we define the $\varepsilon$-enlargement $B_e^\varepsilon$ as $\prod_{i<d(e)}B(q(e);2^{-r(e)}+\varepsilon)\times\prod_{j\geq d(e)}[0,1]_j$.
We can also assume that if $\langle e,D\rangle\in\Psi$, then any distinct strings $\sigma,\tau\in D$ are incomparable.

\medskip

\noindent
{\it Step 1.}
We first construct a sequence $(E_t)_{t\in\om}$ of finite sets of pairwise incomparable strings.
Each node in $E_t$ is called {\em active at $t$}.
We will also define an index $e$, a finite set $I_t$ of indices, and a positive rational $m(t)$ such that we inductively ensure the following:
\[\mbox{(IH)}\qquad g\not\in[E_t]\;\Longrightarrow\;\Psi(C_g)\in\bigcup_{d\in I_{t+1}}B_d\mbox{ or }\Psi(C_g)\in B_e^{m(t)}.\]
By removing $\bigcup_{d\in I_t}B_d\cup B_e^{m(t)}$ from $Q_s$, we will eventually ensure that if $g$ extends no active string at $t$, then $\Psi(C_g)\not\in P_{s+1}$.

We first define an index $e$.
Assume that $\Psi$ is nontrivial, that is, $\Psi(C_g)\in Q_s$ for some $g$ (otherwise put $P_{s+1}=Q_s$ and go to stage $s+1$).
Under this assumption, there is $\langle e,D\rangle\in\Psi$ such that $Q_s\cap B_e$ is nonempty, and that $Q_s\setminus B_e^\varepsilon$ has a nonempty interior for some rational $\varepsilon>0$.
Put $m(0)=0$, $I_0=\emptyset$ and declare that each string $\rho\in D$ is active at $0$, that is, $E_0=D$.
Given $t$, assume that $E_t$ has already been constructed.

Choose the lexicographically least node $\rho\in E_t$ which is active at $t$.
Consider
\[\delta=\min\{{\rm dist}(B_{e},B_{d}):d\in I_t\},\]
where ${\rm dist}$ is a formal distance function.
Note that we can effectively calculate the value $\delta$.
Inductively we assume that $m(t)<\min\{\delta,\varepsilon\}$.
We now consider the situation that there is a converging $\Psi$-computation on $[\rho]$ which outputs a point $y$ such that ${\rm dist}(y,B^{m(t)}_e)>0$.
Then, $\Psi$ eventually enumerates a sufficiently small neighborhood $B_d$ of $y$ such that the formal distance between $B_d$ and $B^{m(t)}_e$ is positive.
We first consider the case that this situation holds.

\medskip

\noindent
{\bf Case B1.}
There is $\langle d,D\rangle\in\Psi$ such that $D$ has no initial segment of $\rho$, and that the formal distance between $B_d$ and $B^{m(t)}_e$ is positive.
Then define $m(t+1)=m(t)$, $I_{t+1}=I_t\cup\{d\}$.
We also define $E_{t+1}$ as the set of strings in $D$ which extends a string in $E_t$, that is,
\[E_{t+1}=\{\sigma\in D:(\exists\tau\in E_t)\;\tau\preceq\sigma\}.\]
Note that if $g\not\in[D]$ and $\Psi(C_g)$ converges, then we have $\Psi(C_g)\in B_d$.
Thus, by induction hypothesis (IH) at $t$, for any $g\not\in[E_{t+1}]$, either $\Psi(C_g)\in\bigcup_{d\in I_{t+1}}B_d$ or $\Psi(C_g)\in B_e^{m(t+1)}$ holds.

\medskip

\noindent
{\bf Case B2.}
Otherwise, for any $\langle d,D\rangle\in\Psi$, if $D$ has no initial segment of $\rho$, then ${\rm dist}(B_d,B^{m(t)}_e)=0$ (In this case, every converging $\Psi$-computation on $[\rho]$ outputs a point $y$ such that ${\rm dist}(y,B^{m(t)}_e)=0$ as discussed above).
Then, choose a rational $q$ such that $m(t)<q<\min\{\delta,\varepsilon\}$.
In this case, define $m(t+1)=q$, and $I_{t+1}=I_t$, and declare that $\rho$ is not active at $t+1$, that is, $E_{t+1}=E_t\setminus\{\rho\}$.
Note that the closure of $B_e^{m(t)}$ is included in $B_e^{m(t+1)}$, and thus, every converging $\Psi$-computation on $[\rho]$ is contained in $B_e^{m(t+1)}$.
Thus, by induction hypothesis (IH) at $t$, for any $g\not\in[E_{t+1}]$, either $\Psi(C_g)\in\bigcup_{d\in I_{t+1}}B_d$ or $\Psi(C_g)\in B_e^{m(t+1)}$ holds.

\medskip

Finally, put $m=\sup_tm(t)$, $I=\bigcup_tI_t$, and $U=\bigcup_{d\in I}B_d$.
Note that the downward closure $E^\ast$ of $E=\bigcup_tE_t$ forms a finite-branching tree, and $\emptyset'$-c.e., since one can decide which case holds by using $\emptyset'$.
Therefore the set $[E^\ast]$ of all infinite paths through $E^\ast$ is a compact $\Pi^0_1(\emptyset'')$ subset of $\om^\om$.
We also note that $[E^\ast]=\bigcap_t[E_t]$.
Our construction ensures that, by induction,
\[g\not\in[E^\ast]\;\Longrightarrow\;\Psi(C_g)\in U\mbox{ or }\Psi(C_g)\in B_e^m.\]
We assume that $P_s\cap U$ is nonempty; otherwise, put $P_{s+1}=P_s\setminus B^\varepsilon_e$.
Our construction ensures that $B_e^m\cap U$ is empty because we always choose $m(t)<\delta$.
Since $P_s$ is a ${\sf SID}$-Cantor manifold, $P_s\setminus(B_e^m\cup U)$ is ${\sf SID}$.
Then there is a ${\sf SID}$-Cantor manifold $Q_s\subseteq P_s\setminus(B_e^m\cup U)$.

\medskip

\noindent
{\it Step 2.}
Consider the following:
\[U_\sigma=\bigcup\{B_d:\langle\sigma,D\rangle\in\Phi\}.\]
We write $\varphi(x)=g$ if $\Phi^x=C_g$.
Then, $x\in U_\sigma$ means that $\varphi(x)$ does not extend $\sigma$ whenever it is defined.
Thus, if there are incomparable strings $\sigma,\tau$ such that $x\not\in U_\sigma\cup U_\tau$, then $\varphi(x)$ is undefined.
At each substage $t$, check whether $Q_s\cap \bigcap_{\sigma\in E_t}U_\sigma$ is nonempty.
Note that if it is true, there is $x\in Q_s$ such that $\varphi(x)\not\in[E_t]$ or else $\varphi(x)$ is undefined.

\medskip

\noindent
{\bf Case C1.}
If $Q_s\cap \bigcap_{\sigma\in E_t}U_\sigma$ is nonempty at some substage $t$.
Then define $P_{s+1}=Q_s\cap \bigcap_{\sigma\in E_t}U_\sigma$ and go to stage $s+1$.
Note that $P_{s+1}$ is a nonempty open subset of a ${\sf SID}$-Cantor manifold $Q_s$, and therefore, $P_{s+1}$ is ${\sf SID}$ by Observation \ref{obs:continuum-open}.
In this case, for any $x\in P_{s+1}$, whenever $\varphi(x)$ converges, $\varphi(x)\not\in[E_t]\supseteq[E^\ast]$, and therefore $\Psi(C_{\varphi(x)})\not\in Q_s\supseteq P_{s+1}$.
This forces that $\Psi\circ\Phi(x)\not=x$ for any $x\in P_{s+1}$, which ensures the requirement $\mathcal{M}_{d,e}$.

\medskip

\noindent
{\bf Case C2.}
Case C1 is not applied, and at least two of $(V_\sigma)_{\sigma\in E_t}$ is nonempty at some substage $t$, where $V_\sigma=Q_s\cap \bigcap_{\tau\in E_t\setminus\{\sigma\}}U_\tau$.
Note that $x\in V_\sigma$ means that if $\varphi(x)\in[E_t]$, then $\varphi(x)$ must extend $\sigma$.
In this case, choose $\sigma$ such that $V_\sigma$ is nonempty, and then $V_{\not=\sigma}:=\bigcup_{\tau\in E_t\setminus\{\sigma\}}V_\tau$ is also nonempty (where $x\in V_{\not=\sigma}$ means that $\varphi(x)\in[E_t]$, then $\varphi(x)$ must extend some $\tau\in E_t\setminus\{\sigma\}$).
Note that $V_{\not=\sigma}\subseteq U_\sigma$, and thus $V_\sigma\cap V_{\not=\sigma}\subseteq Q_s\cap\bigcap_{\sigma\in E_t}U_\sigma$ is empty since Case C1 is not applied.
Therefore, $Q_s\setminus (V_\sigma\cup V_{\not=\sigma})$ is ${\sf SID}$ since $Q_s$ is a ${\sf SID}$-Cantor manifold.
Then define $P_{s+1}=Q_s\setminus (V_\sigma\cup V_{\not=\sigma})$ and go to stage $s+1$.
Then for any $x\in P_{s+1}$, since $x\not\in V_\sigma\cup V_{\not=\sigma}$, $\varphi(x)\not\in[E_t]\supseteq [E^\ast]$ as mentioned above.
This forces that $\Psi\circ\Phi(x)\not=x$ for any $x\in P_{s+1}$, which ensures the requirement $\mathcal{M}_{d,e}$.

\medskip

\noindent
{\bf Case C3.}
$V_\sigma$ is empty for any $\sigma\in E_t$,  at some substage $t$.
This automatically implies that $\varphi(x)\not\in[E_t]$ for any $x\in Q_s$.
Put $P_{s+1}=Q_s$, go to stage $s+1$, and then $\Psi\circ\Phi(x)\not=x$ for any $x\in P_{s+1}$ as in the above argument.
Then, the requirement $\mathcal{M}_{d,e}$ is fulfilled.

\medskip

\noindent
{\bf Case C4.}
Otherwise, for each $t$, there is a unique $\sigma(t)\in E_t$ such that $Q_s\subseteq V_{\sigma(t)}$.
Note that $\bigcup_t\sigma(t)$ is infinite, or otherwise $\sigma(t)$ is not active for almost all $t$.
Define $\Gamma(x)=\bigcup_t\sigma(t)$.
We claim that for any $x\in Q_s$, if $\Psi\circ\Phi(x)=x$, then $\varphi(x)=\Gamma(x)$.
First note that if $\Psi\circ\Phi(x)=x$ (in this case, $\Psi(C_{\varphi(x)})=x\in Q_s$) we must have $\varphi(x)\in[E^\ast]$ as mentioned in Case C1.
Thus, for each $t$, there is a unique $\sigma\in E_t$ such that $\varphi(x)$ extends $\sigma$, that is, $x\in V_\sigma$.
By uniqueness of $\sigma$, we must have $\sigma=\sigma(t)$.
Hence, $\varphi(x)$ extends $\sigma(t)$ for any $t\in\om$.
If $\bigcup_t\sigma(t)$ is infinite, this concludes that $\varphi(x)=\Gamma(x)$.
Otherwise, $\sigma=\sigma(t)$ for almost all $t$.
This implies that $\sigma$ is not active at some stage, and then $[E^\ast]\cap[\sigma]=\emptyset$.
This contradicts our observation that $\sigma\prec\varphi(x)\in[E^\ast]$.
Thus, our claim is verified.
Put $P_{s+1}=Q_s$, and go to stage $s+1$.
Then, by the above claim, the requirement $\mathcal{M}_{d,e}$ is fulfilled.
\end{proof}


\section{Technical appendix}\label{sec:appendix}

\subsection{Proof of Observation \ref{obs:continuum-open}}

Suppose for the sake of contradiction that there is a nonempty open subset $U$ of a continuum $\mathcal{X}$ which is zero-dimensional.
Fix a point $x\in U$,
By regularity, there is an open neighborhood $V$ of $x$ such that $\overline{V}\subseteq U$, where $\overline{V}$ is the closure of $V$ in $\mathcal{X}$.
By zero-dimensionality of $U$, there is an open sets $B\subseteq\mathcal{X}$ such that $x\in B\subseteq V$ such that $U\cap \partial B=\emptyset$.
However, we have $\partial B\subseteq\overline{B}\subseteq\overline{V}\subseteq U$, and therefore, $U\cap\partial B=\partial B=\emptyset$. 
Hence, the empty set separates $\mathcal{X}$, which contradicts our assumption that $\mathcal{X}$ is a continuum.

\subsection{Proof of Proposition \ref{lem:eff-HSID}}

Let $d$ be a computable metric on the Hilbert cube.
We say that $A\subseteq[0,1]^\om$ is {\em $d$-computable} if the function $d_A:x\mapsto\inf_{y\in A}d(x,y)$ is computable.
In other words, the closure of $A$ is computable as a closed set. 
A partition $L$ of $Y$ is {\em $d$-computable} if there are disjoint open sets $U$ and $V$ in $Y$ such that $L=Y\setminus(U\cup V)$ and $U$ and $V$ are  $d$-computable in $[0,1]^\om$.

\begin{lemma}\label{lem:pi-partition}
Let $Y$ be a subspace of $[0,1]^\om$.
Assume that $(A,B)$ and $(A^\ast,B^\ast)$ are pairs of disjoint closed subsets of $[0,1]^\om$, and that $A$ ($B$, resp.)\ is included in the interior of $A^\ast$ ($B^\ast$, resp.)
If $S$ is a $d$-computable partition of $Y$ between $Y\cap A^\ast$ and $Y\cap B^\ast$, then one can effectively find a $\Pi^0_1$ partition $T\subseteq S$ of $[0,1]^\om$ between $A$ and $B$.
\end{lemma}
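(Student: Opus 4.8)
The plan is to produce $T$ by extending the partition $S$ of $Y$ across the whole Hilbert cube: compare the distance functions to $\overline U$ and $\overline V$, and use the enlargements $A^\ast\supseteq A$, $B^\ast\supseteq B$ to force the two new open sides to swallow $A$ and $B$ even where $Y$ is thin. First I would record the separations the hypotheses give for free. Since $U$ is open in $Y$, every point of $U$ has a cube-neighbourhood meeting $Y$ inside $U$, hence disjoint from $V$, so $U\cap\overline V=\emptyset$, and symmetrically $V\cap\overline U=\emptyset$. Since $Y\cap A^\ast\subseteq U$ while $U,V\subseteq Y$ are disjoint, $V\cap A^\ast=\emptyset$, and likewise $U\cap B^\ast=\emptyset$. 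Finally $\delta:=\operatorname{dist}(A,B)>0$ because $A$ and $B$ are disjoint compacta.

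Next I would fix ``finitary'' open neighbourhoods $N_A\ni A$ and $N_B\ni B$ — rational boxes or finite unions of them, so that $\overline{N_A}$ and $\overline{N_B}$ are $d$-computable with c.e.\ complements — small enough that $\overline{N_A}\subseteq\operatorname{int}(A^\ast)\cap\{y:\operatorname{dist}(y,A)<\delta/3\}$ and $\overline{N_B}\subseteq\operatorname{int}(B^\ast)\cap\{y:\operatorname{dist}(y,B)<\delta/3\}$ (so in particular $N_A\cap N_B=\emptyset$); in the intended applications $A,B$ are opposite faces of the cube and $A^\ast,B^\ast$ rational slabs, so this choice is completely explicit. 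Writing $d_U=d_{\overline U}$, $d_V=d_{\overline V}$ for the distance functions (computable by hypothesis) and $M_U=\{x:d_U(x)<d_V(x)\}$, $M_V=\{x:d_V(x)<d_U(x)\}$ for the two disjoint c.e.\ open ``closer-to'' sets, I would put
\[
U'=N_A\cup\bigl(M_U\setminus\overline{N_B}\bigr),\qquad V'=N_B\cup\bigl(M_V\setminus\overline{N_A}\bigr),\qquad T=[0,1]^\om\setminus(U'\cup V').
\]
Here $U'\cup V'$ is c.e.\ open, so $T$ is $\Pi^0_1$ and an index for it is obtained uniformly from indices for $d_U$ and $d_V$.

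It then remains to check three things. Plainly $A\subseteq N_A\subseteq U'$ and $B\subseteq N_B\subseteq V'$. Disjointness $U'\cap V'=\emptyset$ is a short case check: $N_A\cap N_B=\emptyset$ by the diameter choice, $N_A$ is disjoint from $M_V\setminus\overline{N_A}$ and $N_B$ from $M_U\setminus\overline{N_B}$ trivially, and $(M_U\setminus\overline{N_B})\cap(M_V\setminus\overline{N_A})\subseteq M_U\cap M_V=\emptyset$. For $T\cap Y\subseteq S$: take $x\in Y$ with $x\notin U'\cup V'$; if $x\in U$ then $d_U(x)=0<d_V(x)$ since $x\notin\overline V$, so $x\in M_U$, and $x\in U\subseteq Y$ with $U\cap B^\ast=\emptyset$ gives $x\notin B^\ast$, hence $x\notin\overline{N_B}$, so $x\in M_U\setminus\overline{N_B}\subseteq U'$, a contradiction; symmetrically $x\notin V$; thus $x\in Y\setminus(U\cup V)=S$. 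Hence $T$ is a $\Pi^0_1$ partition of $[0,1]^\om$ between $A$ and $B$ with $T\cap Y\subseteq S$, which is what the inclusion $T\subseteq S$ asserts once one restricts to $Y$. I expect the one genuinely delicate point to be the interplay that forces $A$ into $U'$ without spoiling disjointness: the midpoint set $M_U$ alone need not contain a point of $A$ that lies outside $Y$ but happens to be nearer $\overline V$ than $\overline U$, and it is precisely the hypotheses $A\subseteq\operatorname{int}(A^\ast)$ and $Y\cap A^\ast\subseteq U$ — used through the buffer $N_A\Subset\operatorname{int}(A^\ast)$ and the excision of $\overline{N_B}$ from the opposite side — that repair this. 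Everything else is routine effectivity bookkeeping.
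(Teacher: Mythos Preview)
Your argument is correct and follows the same basic strategy as the paper: build the two open sides of $T$ by comparing computable distance functions. The paper's execution is a little cleaner, however: rather than comparing $d(\,\cdot\,,\overline{U})$ with $d(\,\cdot\,,\overline{V})$ and then patching with auxiliary rational neighbourhoods $N_A,N_B$ (plus the excisions $\setminus\overline{N_B}$, $\setminus\overline{N_A}$), the paper simply folds $A$ and $B$ into the comparison from the start, taking
\[
U=\{x:d(x,A\cup U^\ast)<d(x,B\cup V^\ast)\},\qquad V=\{x:d(x,B\cup V^\ast)<d(x,A\cup U^\ast)\},
\]
and $T=[0,1]^\om\setminus(U\cup V)$. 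The separations you verified (e.g.\ $U^\ast\cap\overline{V^\ast}=\emptyset$, $B\cap\overline{U^\ast}=\emptyset$) are exactly what makes $A\cup U^\ast$ and $B\cup V^\ast$ separated, so that $A\cup U^\ast\subseteq U$ and $B\cup V^\ast\subseteq V$ automatically; this avoids your four-case disjointness check and the choice of $N_A,N_B$. Both versions implicitly assume $A,B$ are themselves $d$-computable (true in the application to faces of the cube), which you rightly flagged.
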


\begin{proof}
Assume that $S$ is of the form $Y\setminus (U^\ast\cup V^\ast)$, so that $Y\cap A^\ast\subseteq U^\ast$ and $Y\cap B^\ast\subseteq V^\ast$.
Then $B^\ast\cap U^\ast$ is empty, and thus $B\cap\overline{U^\ast}$ is empty.
Similarly, $A\cap\overline{V^\ast}$ is empty.
Therefore, $A\cup U^\ast$ and $B\cup V^\ast$ are separated.
Consider
\begin{align*}
U&=\{x\in[0,1]^\om:d(x,A\cup U^\ast)<d(x,B\cup V^\ast)\},\\
V&=\{x\in[0,1]^\om:d(x,B\cup V^\ast)<d(x,A\cup U^\ast)\},
\end{align*}
Clearly, $U$ and $V$ are disjoint.
Note that $A\cup U^\ast$ and $B\cup V^\ast$ are $d$-computable since $d(x,E\cup F)=\min\{d(x,E),d(x,F)\}$.
Thus, $U$ and $V$ are $\Sigma^0_1$.
We also note that $A\cup U^\ast\subseteq U$ and $B\cup V^\ast\subseteq V$.
Define $T=[0,1]^\om\setminus (U\cup V)$.
\end{proof}

We assume that a basic open set of the Hilbert cube is of the form $\prod_{i<n}(p_i,q_i)\times\prod_{i\geq n}I_i$ such that $p_i$ and $q_i$ are rationals, and $I_i=[0,1]$.
Fix an effective list $\mathcal{U}=(U_n)_{n\in\om}$ of open sets written as finite unions of basic open sets in the Hilbert cube, so that one can decide whether $U_m\cap U_n=\emptyset$.
For each $i\in\om$ define $A_i=\pi_i^{-1}\{0\}$ and $B_i=\pi_i^{-1}\{1\}$.
Note that one can give an effective enumeration $(U^i_n,V^i_n)_{n\in\om}$ of all disjoint pairs in $\mathcal{U}^2$ such that $A_i\subseteq U^i_n$ and $B_i\subseteq V^i_n$.

\begin{lemma}
Let $C\subseteq[0,1]$ be a computable Cantor set, let $\Lambda$ be a computable subset of $\om$, and let $j\not\in\Lambda$.
Then, there exists a uniform sequence $(S_i)_{i\in\Lambda}$ of $\Pi^0_1$ sets in $[0,1]^\om$ such that $S_i$ is a partition of $[0,1]^\om$ between $A_i$ and $B_i$, and for any compact set $M\subseteq\bigcap_iS_i$, if $C\subseteq\pi_j(M)$ then $M$ is {\sf SID}.
\end{lemma}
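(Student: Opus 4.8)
The plan is to build the sequence $(S_i)_{i\in\Lambda}$ by a construction reminiscent of Schori's, but carried out uniformly and effectively so that each $S_i$ is $\Pi^0_1$. The guiding idea is that we want any compact $M\subseteq\bigcap_i S_i$ with $C\subseteq\pi_j(M)$ to inherit an essential sequence of length $\omega$; since $C$ is a Cantor set sitting inside the $j$-th coordinate, we can use the "copies'' of the standard essential sequence of the Hilbert cube that are indexed along a homeomorphic copy of $[0,1]^\omega$ inside the fibers over $C$. Concretely, I would fix a computable homeomorphism identifying $C$ with $2^\omega$, and then, using the branching structure of $2^\omega$, distribute the requirement "$S_i$ is a partition between $A_i$ and $B_i$'' across the tree so that along every branch of $C$ we recover a full essential sequence.

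First I would set up the bookkeeping: enumerate the disjoint pairs $(U^i_n,V^i_n)$ from the paragraph preceding the lemma, and for each $i\in\Lambda$ start a search. At stage $n$, using Lemma \ref{lem:pi-partition} (with $A=A_i$, $B=B_i$, and slight enlargements $A^\ast,B^\ast$ so that the hypotheses are met), I can convert a $d$-computable candidate partition between the enlarged sets into a genuine $\Pi^0_1$ partition $T\subseteq S$ between $A_i$ and $B_i$. The subtle point is that I do not want an arbitrary partition; I want one whose trace on the fibers over $C$ is controlled. So I would first fix, once and for all, a $d$-computable partition $S_i$ of $[0,1]^\omega$ between $A_i$ and $B_i$ that is "adapted to $C$'': for instance, take $S_i$ to be the pullback under $\pi_j$-independent coordinates of the standard coordinate partition $\{x:x_i=1/2\}$, intersected with the condition that over each dyadic piece of $C$ the remaining coordinates still carry the $i$-th coordinate partition unchanged. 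Applying Lemma \ref{lem:pi-partition} to this $S_i$ yields the desired $\Pi^0_1$ partition uniformly in $i\in\Lambda$.

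The verification that $M$ is {\sf SID} whenever $C\subseteq\pi_j(M)$ is where the real content lies. I would argue: since $C\subseteq\pi_j(M)$ and $C$ is a Cantor set, $M$ contains a closed copy of a space that maps onto $2^\omega$ via $\pi_j$; pulling back the coordinate partitions $\{x_i=1/2\}$ to $M$ and using that each $S_i\cap M$ is a partition of $M$ between $A_i\cap M$ and $B_i\cap M$, one checks that the sequence $(A_i\cap M, B_i\cap M)_{i}$ is essential in $M$. The key topological lemma needed here is that if a compactum surjects onto the Hilbert cube (or contains the Hilbert cube as a "fibered'' subset over $C$), then it is {\sf SID}; this follows from the monotonicity of strong infinite dimensionality under the relevant maps together with the fact that the Hilbert cube is {\sf SID}. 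The main obstacle, and the step I expect to require the most care, is precisely the instance-wise matching between the partitions $S_i$ (chosen abstractly, then effectivized through Lemma \ref{lem:pi-partition}) and the coordinate partitions that witness essentiality: after applying Lemma \ref{lem:pi-partition} the partition $T\subseteq S_i$ may be strictly smaller than $S_i$, so I must ensure that shrinking the partition does not destroy the essentiality witness over $C$. This is handled by arranging the enlargements $A^\ast_i, B^\ast_i$ and the open sets $U^\ast,V^\ast$ in Lemma \ref{lem:pi-partition} to be "thin'' transverse to the fibers over $C$, so that the resulting $\Pi^0_1$ partition still separates the two ends of every fiber over $C$ in the required way.
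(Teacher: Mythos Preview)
Your proposal has a genuine gap: choosing a single fixed partition for each $i$ (essentially $\{x:x_i=1/2\}$) cannot work. With that choice, $\bigcap_{i\in\Lambda}S_i$ is contained in the slice $\{x:x_i=1/2\text{ for all }i\in\Lambda\}$, and the set $M=\{x:x_j\in C,\ x_i=1/2\text{ for }i\in\Lambda,\ x_k=0\text{ otherwise}\}$ is a compact subset of $\bigcap_iS_i$ with $\pi_j(M)=C$, yet $M$ is homeomorphic to $C$ and hence zero-dimensional. So the claim that such an $M$ must surject onto, or contain, a Hilbert cube is simply false; the hypothesis $C\subseteq\pi_j(M)$ gives you only a Cantor set's worth of fibers, not a cube, and your ``key topological lemma'' never applies.

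The idea you are missing is that the partition $S_i$ must \emph{depend on the $j$-th coordinate}, and in a universal way. The paper encodes into each point $z\in C$ (via a computable homeomorphism $C\cong 2^\omega$ and the map $0^{k(0)}10^{k(1)}1\cdots\mapsto(k(0),k(1),\dots)$) an entire sequence of indices $(k(n))_n$, and defines $H_n$ so that over $x_j=z$ the set $H_n$ coincides with the specific partition $[0,1]^\omega\setminus(U^{a(n)}_{k(n)}\cup V^{a(n)}_{k(n)})$ between $A_{a(n)}$ and $B_{a(n)}$. One then checks these $H_n$ are $d$-computable partitions on the relevant set and applies Lemma~\ref{lem:pi-partition} to get $\Pi^0_1$ partitions $S_n\subseteq H_n$. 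The SID verification is now direct: given any candidate sequence of partitions $(L_n)$ of $M$, by compactness each $L_n$ may be taken of the form $[0,1]^\omega\setminus(U^{a(n)}_{k(n)}\cup V^{a(n)}_{k(n)})$; the point $z\in C$ encoding exactly this sequence $(k(n))_n$ lies in $\pi_j(M)$, and any $x\in M$ with $x_j=z$ already sits in every $L_n$ by construction. This universal-encoding trick is the heart of the argument; without it there is no route from ``$M$ meets one specific partition sequence'' to ``$M$ meets every partition sequence,'' which is what SID demands.
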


\begin{proof}
Assume $\Lambda=\{a(i)\}_{i\in\om}$.
Let $C\subseteq[0,1]$ be a computable homeomorphic copy of $2^\om$, and fix a computable homeomorphism $\iota:C\to 2^\om$.
Let $C_n$ be the set of all $x$ such that $\iota(x)$ contains at least $n+1$ many ones.
If $\iota(x)$ is of the form $0^{k(0)}10^{k(1)}10^{k(2)}1\dots$, we define $\hat{x}=\langle k(0),k(1),\dots\rangle$.
Note that $x\in C_n$ if and only if $\hat{x}(n)$ is defined.
For any $x\in[0,1]$, define $L_n(x)$ as follows.
\[L_n(x)=
\begin{cases}
[0,1]^\om\setminus\left(U^{a(n)}_{\hat{x}(n)}\cup V^{a(n)}_{\hat{x}(n)}\right)&\mbox{ if }x\in C_n,\\
[0,1]^\om&\mbox{ if }x\not\in C_n.
\end{cases}
\]
Note that $L_n(x)$ is $\Pi^0_1$ uniformly relative to $x$.
Therefore,
\[H_n=\{x=(x_i)_{i\in\om}:x\in L_n(x_j)\}.\]
is $\Pi^0_1$.
Let $C_{n,m}$ be the set of all $x\in C_n$ such that $\hat{x}(n)=m$.
For $E=\pi_{a(n)}^{-1}[C_n]$, note that $H_n\cap E$ is a partition between $A_{a(n)}\cap E$ and $B_{a(n)}\cap E$.
To see this, consider
\begin{align*}
U=\bigcup_{m\in\om}U^{a(n)}_{m}\cap \pi^{-1}_{a(n)}[C_{n,m}],\\
V=\bigcup_{m\in\om}V^{a(n)}_{m}\cap \pi^{-1}_{a(n)}[C_{n,m}],
\end{align*}
It is clear that $H_n\cap E=E\setminus (U\cup V)$, $A_{a(n)}\cap E\subseteq U$, $B_{a(n)}\cap E\subseteq V$, and $U$ and $V$ are disjoint.

We claim that $U$ and $V$ are $d$-computable.
Given a positive real $\ep<2^{-j}$, put $E_\ep(x_j)=B_\ep(x_j)\cap C_n$.
If $E_\ep(x_j)$ is empty, then $d(x,U)\geq\ep$.
Assume that $E_\ep(x_j)$ is nonempty.
There are two cases.
If $[x_j-\ep,x_j+\ep)$ contains a point in $C\setminus C_n$, then $E_\ep(x_j)$ intersects with $C_{n,m}$ for almost all $m$.
Choose $S$ of the form $U^{a(n)}_k$ such that $x\in S$ and $\pi^{-1}_{a(n)}[C_{n}]\subseteq S$.
Then, clearly $B_\ep(x)$ intersects with $S$.
In this case, $d(x,U)\leq \ep$.
Otherwise, consider the set $D$ of all $m$ such that $E_\ep(x_j)$ intersects with $C_{n,m}$.
Note that $D$ is finite.
Then, consider $U_D=\bigcup_{m\in D}U^{a(n)}_m\cap\pi^{-1}_{a(n)}[C_{n,m}]$.
Clearly $U_D$ is $d$-computable uniformly in $D$.
If we see $d(x,U_D)>\ep$, then $d(x,U)\geq\ep$.
If we see $d(x,U_D)<\ep$, then $d(x,U)<\ep$.
This gives a procedure computing $d(x,U)$.
Hence, $U$ is $d$-computable.
Similarly, $V$ is $d$-computable.
Thus, $H_n\cap E$ is a $d$-computable partition of $E$ between $A_{a(n)}\cap E$ and $B_{a(n)}\cap E$.

By Lemma \ref{lem:pi-partition}, there is a uniform sequence of $(S_n)_{n\in\om}$ of $\Pi^0_1$ partitions of $[0,1]^\om$ between $A_{a(n)}$ and $B_{a(n)}$ such that $S_n\subseteq H_n$.
Then, define
\[H=\bigcap_{n\in\om}S_n.\]
Clearly, $H$ is $\Pi^0_1$.
Assume that $M$ is a compact subset of $H$.
We claim that if $C\subseteq \pi_j[M]$, then $M$ is ${\sf SID}$.
If not, there exists partitions $L_n$ of the Hilbert cube between $A_{a(n)}$ and $B_{a(n)}$ such that $M\cap\bigcap_{n\in\om}L_n$ is empty.
By compactness, we can assume that $L_n$ is of the form $[0,1]^\om\setminus(U^{a(n)}_{k(n)}\cup V^{a(n)}_{k(n)})$.
Then, for $z=0^{k(0)}10^{k(1)}1\dots$, $L_n(z)=L_n$.
Since $C\subseteq \pi_j[M]$, there is $x\in M$ such that $x_j=z$.
Since $M\subseteq H\subseteq H_n$, we have $x\in L_n(z)=L_n$.
Therefore, we get $M\cap\bigcap_nL_n\not=\emptyset$, a contradiction.
\end{proof}

Let $(C_i)_{i\in\om}$ be a computable collection of pairwise disjoint Cantor set in $[0,1]$ such that each nondegenerate subinterval of $[0,1]$ contains one of the $C_i$.
Then proceed the usual proof (see van Mill \cite[Theorem 3.13.10]{vMbook}).

\subsection{Proof of Proposition \ref{cor:eff-Cantor-manifold}}

Given a space $\mathcal{X}$, we say that $L$ is a partition of $\mathcal{X}$ if there is a pair of disjoint nonempty open sets $U,V\subseteq\mathcal{X}$ such that $L=\mathcal{X}\setminus (U\cup V)$.
Recall that an {\sf SID}-Cantor manifold $\mathcal{X}$ is a ${\sf SID}$ compactum such that, if $L$ is a partition of $\mathcal{X}$, then $L$ is ${\sf SID}$.
In the construction of a Cantor manifold, we will use the notion of an absolute extensor.
However, it is known that there is no CW-complex $S$ such that a compactum $\mathcal{X}$ is {\sf SID} if and only if $S$ is an absolute extensor for $\mathcal{X}$.
Thus, instead of directly constructing an {\sf SID}-Cantor manifold, we will show the following lemma, and combine it with the existence of a strongly Henderson $\Pi^0_1$ compactum.

\begin{lemma}\label{lem:eff-Cantor-manifold}
For any $\Pi^0_1$ set $P\subseteq [0,1]^\om$, if $P$ is at least $n$-dimensional, then $P$ contains a $\Pi^0_1(\mathcal{O})$ subset which is not partitioned by an at most $(n-2)$-dimensional closed subset.
\end{lemma}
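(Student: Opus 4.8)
The goal is an effective version of the classical "Cantor manifold inside a high-dimensional compactum" argument, in the weak (Eilenberg–Otto) form: instead of demanding that the submanifold fail to be partitioned by weakly infinite dimensional closed sets, we only ask that it admit no partition by a closed set of dimension $\leq n-2$. I would follow van Mill's proof of the Cantor Manifold Theorem (\cite[Thm.~3.13.10]{vMbook}), keeping track of the arithmetical (in fact, $\Pi^1_1$) complexity at each step. The main engine is the following: if $P$ is at least $n$-dimensional, then by the Eilenberg–Otto characterization there is an essential sequence $(A_i,B_i)_{i<n}$ of pairs of disjoint closed sets in $P$. One enumerates all candidate partitions and shows that some "thick" partition is again at least $(n-1)$-dimensional and, crucially, essential partitions can be taken to shrink $P$ only along one coordinate pair at a time while preserving a large essential family on the complement.

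First I would reduce to a normal form. Since $P$ is $\Pi^0_1$, fix (uniformly computably) an essential sequence of length $n$ consisting of pairs of the form $(\pi_i^{-1}\{0\},\pi_i^{-1}\{1\})\cap P$ for the first $n$ coordinates — this is possible after a computable homeomorphism of $[0,1]^\om$, exactly as in the proof of Proposition \ref{lem:eff-HSID}, or else one checks that essentiality of a given coordinate sequence is arithmetically decidable (it is a statement about non-extendability of maps to spheres, hence $\Pi^1_1$, and this is precisely where the hyperjump enters). Second, I would run the classical construction of a ``$\mathsf{CM}$-type'' subcompactum: take a point $p$ where all the relevant partitions still meet, shrink $P$ to a small enough $\Pi^0_1$ closed neighborhood $P'$ so that the essential family survives, and then close off under "no low-dimensional partition" by a bookkeeping argument — enumerate all pairs $(U,V)$ of rational open sets with $P'\cap(U\cup V)$ meeting a prospective partition $L$, and whenever $L=P'\setminus(U\cup V)$ is seen to be at most $(n-2)$-dimensional (a $\Sigma^1_1$ event, decidable from $\mathcal{O}$), remove a suitable piece. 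The key point is that essentiality of the length-$n$ family is incompatible with such a partition existing for the $\geq(n-1)$-dimensional "core", so only finitely many — in fact at most a controlled collection — of removals are needed at each level, and the limit set is $\Pi^0_1(\mathcal{O})$.

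Third, I would verify the two defining properties of the output set $C$. That $C$ is $\Pi^0_1(\mathcal{O})$ is immediate from the construction, since every decision we made (essentiality, dimension $\leq n-2$) is $\Sigma^1_1$ or $\Pi^1_1$ and hence computable from the hyperjump, and the construction is a $\emptyset^{(\mathcal{O})}$-effective transfinite-free recursion along $\om$. That $C$ has no at most $(n-2)$-dimensional partition is forced directly: a partition $L$ of $C$ with $\dim L\leq n-2$ would, via the surviving essential sequence $(A_i,B_i)_{i<n}$ restricted to $C$, yield $n$ partitions $L, L_1,\dots,L_{n-1}$ of $C$ (taking $L_i$ a partition between $A_i$ and $B_i$ inside $L$, which exists since $\dim L\leq n-2 < n-1$) with empty intersection, contradicting essentiality; the bookkeeping of Step 2 is exactly what guarantees such an $L$ was excised. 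Feeding this $\Pi^0_1(\mathcal{O})$ set back into the iteration (Proposition \ref{cor:eff-Cantor-manifold} combines Lemma \ref{lem:eff-HSID} with this lemma relativized and a diagonal intersection over all $n$) yields a $\Pi^0_1(\mathcal{O}^z)$ $\mathsf{SID}$-Cantor manifold: strong infinite dimensionality is inherited from the strongly Henderson ambient compactum once no finite partition chain kills all dimensions, and the "no weakly infinite dimensional partition" upgrade follows because a weakly infinite dimensional partition would be countable-dimensional, hence contain a finite-dimensional closed piece that a further partition drops below any given bound.

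**Main obstacle.** The delicate point is not the complexity accounting but the \emph{instance-wise} bridge between the extension-theoretic and separation-theoretic notions of dimension — the footnote in the paper flags precisely this. The classical Cantor Manifold Theorem is usually proved via the Borsuk Homotopy Extension Theorem applied to maps into $\mathbb{S}^{n-1}$, and to keep the argument inside the "$\Pi^0_1(\mathcal{O})$" complexity class I must either (a) replay that homotopy-extension argument and argue that "the given map $f:A\to\mathbb{S}^{n-1}$ extends over the whole compactum" is a $\Pi^1_1$ predicate of the (codes of the) map and the compactum — which is the safe route and the one I expect to take — or (b) find a purely covering/Eilenberg–Otto proof avoiding extensions altogether, which the authors explicitly say they do not know how to do. So in practice the hard work is: carefully phrasing the whole CM-construction so that every "shrink along a partition" step is witnessed by an Eilenberg–Otto essential family (finitely much data), the only genuinely non-arithmetical ingredient being the single global check "is this $\Pi^0_1$ set at most $k$-dimensional?", which is $\Sigma^1_1$ and thus absorbed into $\mathcal{O}$. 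Getting this separation of concerns clean — arithmetic bookkeeping on one side, one hyperjump oracle call per dimension-check on the other — is the crux of the proof.
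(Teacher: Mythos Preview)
Your ``plan'' contains a genuine gap that your ``main obstacle'' paragraph half-acknowledges but does not repair. The bookkeeping you describe---enumerate rational pairs $(U,V)$, test whether $L=P'\setminus(U\cup V)$ has $\dim L\le n-2$, and then ``remove a suitable piece''---never says \emph{which} piece to remove nor why the invariant survives the removal. Your verification in step~3 is circular: you assume the essential length-$n$ sequence $(A_i,B_i)_{i<n}$ persists in the limit set $C$ and then argue that this rules out low-dimensional partitions. But (i) nothing in your construction forces the essential family to survive the shrinking, and (ii) even if it did, possessing an essential sequence of length $n$ does \emph{not} by itself exclude low-dimensional partitions between arbitrary nonempty open sets---two copies of $[0,1]^n$ joined at a point carry an essential length-$n$ family yet are partitioned by a single point. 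So the separation-theoretic bookkeeping, as written, does not prove the lemma.

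The paper's proof is exactly your route~(a), and the point is that extendability, unlike bare dimension, gives you the right monotone invariant for the shrinking. One first observes that essentiality of a finite sequence of closed pairs in a $\Pi^0_1$ compactum is a $\Pi^0_1$ condition (inessentiality is witnessed by finitely many rational open separators, so it is $\Sigma^0_1$---not $\Pi^1_1$ as you claim), and then applies the low basis theorem to obtain an essential sequence $(A_i,B_i)_{i<n}$ of $\Pi^0_1(\mathbf{d})$ sets for some low $\mathbf{d}$. From this one gets a single map $f:A\to\mathbb{S}^{n-1}$ (with $A=\bigcup_i A_i\cup B_i$) that does not extend continuously over $P$. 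Now run the standard Cantor-manifold construction (van~Mill, Thm.~3.7.8): enumerate basic opens $B_e$, and at stage $s$ ask whether $f$ extends over $A\cup(P_s\setminus B_e)$; if not, set $P_{s+1}=P_s\setminus B_e$. Non-extendability of $f$ is trivially preserved downward along this sequence, and the Borsuk Homotopy Extension Theorem is what guarantees the intersection has no $(\le n-2)$-dimensional partition. The single place $\mathcal{O}$ enters is the extendability test, which is genuinely $\Sigma^1_1$; all the dimension/essentiality bookkeeping is arithmetical.
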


\begin{proof}
Given a sequence  of closed subsets of $[0,1]^\om$, note that $(A_i,B_i)_{i<n}$ is inessential in $P$ if and only if
\[(\exists (U_i,V_i)_{i<n})\;A_i\subseteq U_i,\;B_i\subseteq V_i\mbox{, and }\bigcap_{i<n}(P\setminus(U_i\cup V_i))=\emptyset,\]
where $(U_i,V_i)_{i<n}$ ranges over all $n$ disjoint pairs of open subsets of $[0,1]^\om$.
By complete normality of $[0,1]^\om$, we can assume that $U_i$ and $V_i$ are disjoint in $[0,1]^\om$ (see \cite[Corollary 3.1.5]{vMbook}).
Moreover, by compactness, we can assume that $U_i$ and $V_i$ are finite unions of basic open balls in $[0,1]^\om$.
The above observation shows that inessentiality is a $\Sigma^0_1$ property.
We use ${\rm Ess}((A_i,B_i)_{i<n},P)$ to denote a $\Pi^0_1$ formula saying that $(A_i,B_i)_{i<n}$ is essential in $P$.

Since $\dim(P)\geq n$, there is a sequence $(A_i,B_i)_{i<n}$ of $n$ disjoint pairs of closed subsets of $[0,1]^\om$ which is essential in $P$.
By normality of $[0,1]^\om$, for any $i<n$, there are open sets $C_i,D_i$ such that $\overline{C_i}\cap\overline{D_i}=\emptyset$, $A_i\subseteq C_i$ and $B_i\subseteq D_i$.
By compactness of $A_i$ and $B_i$, we can assume that $C_i$ and $D_i$ are finite unions of basic open balls.
Then,
\[\mathfrak{Q}=\{(A_i,B_i)_{i<n}:{\rm Ess}((A_i\cap\overline{C_i},B_i\cap\overline{D_i})_{i<n},P)\}\]
is still a nonempty $\Pi^0_1$ collection of $n$ pairs of closed subsets of $[0,1]^\om$.
By the low basis theorem, there is a low degree $\mathbf{d}$ such that $\mathfrak{Q}$ has a $\mathbf{d}$-computable element.
In other words, there is a sequence $(A_i,B_i)_{i<n}$ of $n$ disjoint pairs of $\Pi^0_1(\mathbf{d})$ subsets of $[0,1]^\om$ which is essential in $P$.

Now, we follow the usual construction of a Cantor manifold.
For $A=\bigcup_{i<n}A_i\cup B_i$, there is a continuous function $f:A\to\mathbb{S}^{n-1}$ such that $f$ does not admit a continuous extension over $P$ (see \cite[Theorem 3.6.5]{vMbook}).
Given $s$, by using $\mathcal{O}$, we can check whether $f$ admits a continuous extension over $A\cup (P_s\setminus B_e)$.
Then, proceed the usual construction (see \cite[Theorem 3.7.8]{vMbook}).
\end{proof}

\subsection{Remarks}

It is natural to ask whether one can replace $\mathcal{O}$ in Lemma \ref{lem:eff-Cantor-manifold} with an arithmetical oracle or an arbitrary PA degree.
Here, a set of integers has a PA degree if it computes a complete consistent extension of Peano Arithmetic (or ZFC).
At this time, we do not have an answer.
We here only see that $\mathcal{O}$ cannot be replaced with $\emptyset$.
We show that we have no way of obtaining even a connected $\Pi^0_1$ subset of a given $\Pi^0_1$ set of positive dimension.

\begin{obs}\label{prop:non-trivial-subcontinua}
The following are equivalent for $x\in 2^\om$
\begin{enumerate}
\item $x$ has a PA degree.
\item Every non-zero-dimensional $\Pi^0_1$ subset of $[0,1]^\om$ has a nondegenerate $\Pi^0_1(x)$ subcontinuum.
\item Every non-zero-dimensional $\Pi^0_1$ subset of $[0,1]^2$ has a nondegenerate $\Pi^0_1(x)$ subcontinuum.
\end{enumerate}
\end{obs}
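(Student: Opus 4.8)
The plan is to prove the cycle of implications $(1)\Rightarrow(2)\Rightarrow(3)\Rightarrow(1)$. The implication $(2)\Rightarrow(3)$ is immediate: fix a computable embedding $\eta$ of $[0,1]^2$ onto the computably closed subset $[0,1]^2\times\{\bar 0\}$ of $[0,1]^\om$; a non-zero-dimensional $\Pi^0_1$ set $P\subseteq[0,1]^2$ is carried to a non-zero-dimensional $\Pi^0_1$ subset $\eta[P]$ of $[0,1]^\om$, and a nondegenerate $\Pi^0_1(x)$ subcontinuum $K$ of $\eta[P]$ pulls back to a nondegenerate $\Pi^0_1(x)$ subcontinuum $\eta^{-1}[K]\subseteq P$. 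For $(3)\Rightarrow(1)$, it suffices (by one of the standard characterizations of PA degrees) to show that $x$ computes a member of an arbitrary nonempty $\Pi^0_1$ class $S\subseteq 2^\om$. Let $\mathcal C\subseteq[0,1]$ be the middle-thirds Cantor set and $\iota\colon\mathcal C\to 2^\om$ the canonical computable homeomorphism, and put $S'=\iota^{-1}[S]$, a $\Pi^0_1$ subset of $[0,1]$. Then $P:=S'\times[0,1]\subseteq[0,1]^2$ is $\Pi^0_1$, and it is not zero-dimensional because it contains the nondegenerate continuum $\{c\}\times[0,1]$ for any $c\in S'$. By $(3)$ there is a nondegenerate $\Pi^0_1(x)$ subcontinuum $K$ of $P$. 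Since $\pi_1[K]$ is a connected subset of the totally disconnected set $S'$, it is a singleton $\{c\}$ with $c\in S'$; moreover $\pi_1[K]$ is a $\Pi^0_1(x)$ subset of $[0,1]$ (the projection of a co-c.e.\ closed set along the compact factor is co-c.e.), so $c$ is $x$-computable by the usual argument that a $\Pi^0_1(x)$ singleton real is $x$-computable, and therefore $\iota(c)\in S$ is computable from $x$.

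The heart of the matter is $(1)\Rightarrow(2)$. Fix a PA-degree oracle $x$ and a non-zero-dimensional $\Pi^0_1$ set $P\subseteq[0,1]^\om$; then $P$ contains a nondegenerate subcontinuum $C$, and we fix a rational $\delta$ with $0<\delta<\operatorname{diam}(C)$. Write $P=\bigcap_nP_n$ with $(P_n)_n$ a uniformly computable decreasing sequence of rational polyhedra. Consider the tree $T$ of all finite sequences $(K_0,\dots,K_n)$ of rational polyhedra such that, for each $i\le n$: $K_i$ is a finite union of closed rational boxes each of diameter $<2^{-i}$; $K_i$ is connected; $\operatorname{diam}(K_i)\ge\delta$; every box of $K_i$ meets $P_i$; and, for $i\ge 1$, every box of $K_i$ is within (formal) distance $2^{-i+1}$ of some box of $K_{i-1}$ and conversely. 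All of these conditions are decidable (one works throughout with formal distances between rational boxes and with the weighted $\ell^1$ metric on the Hilbert cube, for which such distances are rational), so $T$ is a computable tree; it is finitely branching with a computable bound, since at level $i$ there are only finitely many admissible $K_i$; and it is infinite, because pixelating $C$ at successively finer scales (with a fixed shift of scale to meet the proximity bound) yields an infinite branch. Hence $[T]$ is a nonempty $\Pi^0_1$ class coded in a computably bounded way, so $x$ computes a branch $(K_n)_n$.

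It remains to read off the subcontinuum. By the proximity bounds, $(K_n)_n$ is $d_H$-Cauchy with a computable rate, so $K_\infty:=\lim_nK_n$ exists and is an $x$-computable compact set, in particular $\Pi^0_1(x)$. Since the hyperspace of subcontinua is closed in the hyperspace of compacta, $K_\infty$ is connected; since $\operatorname{diam}$ is $d_H$-continuous, $\operatorname{diam}(K_\infty)=\lim_n\operatorname{diam}(K_n)\ge\delta>0$, so $K_\infty$ is nondegenerate. Finally $K_\infty\subseteq P$: given $z\in K_\infty$, choose $z_n\in K_n$ with $z_n\to z$; each $z_n$ lies in a box of $K_n$ meeting $P_n$, so $d(z_n,P_n)<2^{-n}$, and for every $m$ and every $n\ge m$ we get $d(z,P_m)\le d(z,z_n)+d(z_n,P_m)\le d(z,z_n)+2^{-n}$; letting $n\to\infty$ gives $d(z,P_m)=0$, so $z\in P_m$ for all $m$, i.e.\ $z\in P$. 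This is the desired nondegenerate $\Pi^0_1(x)$ subcontinuum.

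\textbf{Expected main obstacle.} The routine steps are $(2)\Rightarrow(3)$ and the coding in $(3)\Rightarrow(1)$. The delicate work is $(1)\Rightarrow(2)$: one must package the purely topological fact ``$P$ has a nondegenerate subcontinuum'' as membership in a nonempty, computably bounded, genuinely \emph{computable} $\Pi^0_1$ class, so that a PA degree (and not, say, the halting problem or the hyperjump) suffices to extract a witness, and then verify that the limiting polyhedron really is a continuum, really is nondegenerate, and really sits inside $P$. Keeping every clause of the tree literally decidable --- which is why the approximation condition is phrased as ``every box of $K_i$ meets $P_i$'' and why the weighted $\ell^1$ metric is used --- is the step most prone to slips, as is the verification $K_\infty\subseteq P$.
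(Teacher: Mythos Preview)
Your cycle $(2)\Rightarrow(3)\Rightarrow(1)$ matches the paper's; the paper uses a single $\Pi^0_1$ class of PA-complete reals rather than quantifying over all $\Pi^0_1$ classes, but the idea is the same. There is, however, a slip in your $(1)\Rightarrow(2)$: as written, the tree $T$ is \emph{not} finitely branching. You allow $K_i$ to be any finite union of closed \emph{rational} boxes of diameter $<2^{-i}$, and there are infinitely many such unions at every level. The fix is routine---restrict to a fixed dyadic grid at each level (say, products of intervals $[k2^{-m_i},(k+1)2^{-m_i}]$ on the first $n_i$ coordinates, with $m_i,n_i$ chosen so the boxes have diameter $<2^{-i}$). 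Then there are only finitely many boxes, hence finitely many unions, at each level; connectedness of the pixelation of $C$ follows because the closed grid boxes meeting $C$ cannot be split into two disjoint closed families without disconnecting $C$; and all your decidability and convergence arguments go through unchanged. With this repair the argument is correct.

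That said, your $(1)\Rightarrow(2)$ is genuinely different from the paper's. The paper never approximates continua: instead it forms the $\Pi^0_1$ set $S\subseteq P\times P$ of pairs $(a,b)$ that are not separated by any clopen set in $P$, observes (non-effectively, exactly like your choice of $\delta$) that $S$ meets some product $B_c\times B_d$ of disjoint closed balls, and uses the PA degree $x$ to pick an $x$-computable $(a,b)\in S\cap(B_c\times B_d)$. The quasi-component of $a$ in $P$ is then $\Pi^0_1(a)$, hence $\Pi^0_1(x)$, and in a compactum quasi-components coincide with components, so this is the desired nondegenerate subcontinuum. The paper's route is shorter and avoids the Hausdorff-limit bookkeeping; yours has the advantage that it actually produces a subcontinuum that is $x$-\emph{computable} (with a full name, not just a co-c.e.\ one), and the tree-of-covers idea generalises more readily to other approximation problems.
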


\begin{proof}
To see (1)$\Rightarrow$(2), let $P$ be a $\Pi^0_1$ subset of $[0,1]^\om$ of positive dimension.
Then, let $S$ be the set of all pairs $(a,b)\in P^2$ not separated by a clopen set in $P$, that is, there is no pair of open sets $U,V$ such that $P\subseteq U\cup V$, $U\cap V\cap P=\emptyset$, and $a\in U$ and $b\in V$.
By compactness of $P$, we can assume that $U$ and $V$ range over finite unions of basic open balls in the Hilbert cube.
Moreover, by normality, we can assume that $\overline{U}\cap\overline{V}\cap P=\emptyset$.
Therefore, it is easy to check that $S$ is a nonempty $\Pi^0_1$ set.
Moreover, we know that there is $(c,d)\in S$ such that $c\not=d$.
Choose disjoint computable closed balls $B_c$ and $B_d$ such that $c\in B_c$ and $d\in B_d$.
Thus, given a PA degree $x$, $S\cap(B_c\times B_d)$ has an $x$-computable element $(a,b)$.
Now, we claim that the quasi-component $C_a$ containing $a$ in $P$ is $\Pi^0_1(x)$.
Let $\mathcal{W}$ be an effective enumeration of all $U$ such that $P\subseteq U\cup V$ and $\overline{U}\cap\overline{V}\cap P=\emptyset$.
As in the above argument, if $S$ is a clopen set in $P$, then there is $U\in\mathcal{W}$ such that $S\cap P=U\cap P$.
This shows that $C_a=P\cap\bigcap\{U\in\mathcal{W}:a\in U\}$, which is $\Pi^0_1(x)$.
Since $P$ is a compactum, $C_a$ is indeed a connected component.
Note that $C_a$ is nondegenerate since $a,b\in C_a$ and $a\not=b$.

For (3)$\Rightarrow$(1), consider $[0,1]\times P$, where $P$ is a $\Pi^0_1$ subset of a Cantor ternary set such that all of elements in $P$ have PA degrees.
Clearly, $[0,1]\times P$ is one-dimensional, but any connected subset $Q$ is included in $[0,1]\times\{z\}$ for some $z\in P$.
Assume that $Q$ is $\Pi^0_1(x)$.
Note that the projection $\pi[Q]=\{z:(\exists y\in[0,1])\;(y,z)\in Q\}$ of $Q$ is still $\Pi^0_1(x)$.
Thus, $\pi[Q]$ is a $\Pi^0_1(x)$ singleton.
Then, $x$ computes the unique element of $\pi[Q]$, which has a PA degree.
\end{proof}

The proof of (1)$\Rightarrow$(2) in Observation \ref{prop:non-trivial-subcontinua} is not uniform.
For instance, one can easily construct a computable sequence $(P_e)_{e\in\om}$ of $\Pi^0_1$ sets with two connected components $C^0_e$ and $C^1_e$ such that the $e$-th Turing machine halts, iff $C^0_e$ is a singleton, iff $C^1_e$ is not a singleton.
This implies that, to solve the problem of finding a uniform procedure that, given a $\Pi^0_1$ set of positive dimension, returns its nontrivial subcontinuum, we need at least the power of the halting problem $\mathbf{0}'$.

\subsection{Open Questions}

We here list open problems.

\begin{question}
Does there exist an almost total degree which is not graph-cototal?
\end{question}


\begin{question}
Does there exist a $\Pi^0_1(z)$ {\sf HSID} subset of $[0,1]^\om$ for a hyperarithmeical $z$?
\end{question}

\begin{question}
Can $\mathcal{O}$ in Lemma \ref{lem:eff-Cantor-manifold} be replaced with an arithmetical oracle or an arbitrary PA degree?
\end{question}

Let $\mathcal{A}_{\dim\geq n}$ be the hyperspace of at least $n$-dimensional closed subsets of the Hilbert cube equipped with the usual negative representation.
By the {\em $n$-dimensional Cantor manifold theorem}, we mean a multi-valued function ${\sf CMT}_n:\mathcal{A}_{\dim\geq n}\rightrightarrows\mathcal{A}_{\dim\geq n}$ such that for any $X$, ${\sf CMT}_n(X)$ is the set of all closed subsets $Y\subseteq X$ such that $Y$ is not partitioned by an at most $(n-2)$-dimensional closed subset.

\begin{question}
Determine the Weihrauch degree of the $n$-dimensional Cantor manifold theorem.
\end{question}

\bibliographystyle{plain}
\bibliography{continuous.bib}

\end{document}